\newtheorem{introthm}{Theorem}
\newtheorem{theorem}{Theorem}[section]
\newtheorem{lemma}[theorem]{Lemma}
\newtheorem{corollary}[theorem]{Corollary}
\theoremstyle{definition}
\newtheorem{example}[theorem]{Example}
\theoremstyle{remark}
\numberwithin{equation}{section}
\newcommand{\gen}[1]{\langle#1\rangle}
\newcommand{\lra}{\leftrightarrow}
\begin{document}

\title[A graph for groups]{Generalizing the enhanced power graph of a group with respect to automorphisms}

%Information for first author
\author[Mohammadian]{Abbas Mohammadian}
%    Address of record for the research reported here
\address{Department of Pure Mathematics, Ferdowsi University of Mashhad, \linebreak \\ P.O.Box 1159-91775, Mashhad, Iran.\\}
\email{abbasmohammadian1248@gmail.com}
%    \thanks will become a 1st page footnote.
%\thanks{The first author was supported in part by NSF Grant \#000000.}

%    Information for second author
\author[Guloglu]{Ismail Guloglu}
\address{Department of Mathematics, Dogus¸ University, Istanbul, Turkey.\\}
\email{iguloglu@dogus.edu.tr}

\author[Erfanian]{Ahmad Erfanian}
\address{Department of Pure Mathematics, Ferdowsi University of Mashhad, \linebreak \\ P.O.Box 1159-91775, Mashhad, Iran.\\}
\email{erfanian@math.um.ac.ir}
%\thanks{Support information for the second author.}

\author[Lewis]{Mark L. Lewis}
\address{Department of Mathematical Sciences, Kent State University, Kent, OH  44242, U.S.A.}
\email{lewis@math.kent.edu}

%    General info
\subjclass[2000]{Primary 20D45; Secondary 05C25, 20E45}

\date{February 19, 2025}

%\dedicatory{This paper is dedicated to our advisors.}

\keywords{enhanced power graph, groups of automorphisms, universal vertices, empty graphs}

\begin{abstract}
We generalize the enhanced power graph by replacing elements with classes under automorphisms.  We show that the connectivity and diameter of this graph is similar to that of the enhanced power graph.  We consider the universal vertices of this graph and when this graph is a complete graph.  Finally, we classify when this graph is the empty graph.  
\end{abstract}

\maketitle

\section{Introduction}

Throughout this paper, $G$ is a finite group.  In recent years, there have been a great deal of interest in studying the interaction between groups and graphs.  We recommend the reader consult the excellent expository paper by Cameron \cite{camex} to see body of work.

Perhaps the graph that has received the most attention is the commuting graph.  When $G$ is a group, the {\it commuting graph} of $G$ is the graph whose vertex set is $G \setminus \{ 1 \}$ and two elements $x, y \in G \setminus \{ 1 \}$ form an edge when $xy = yx$.  Herzog, Longobardi and Maj proposed studying a modification this graph in \cite{mh-pl-mm} and defined the \textit{commuting conjugacy class graph} (or CCC-graph) of a group $G$ as the graph whose vertex set is the set of nontrivial conjugacy classes of $G$ where two distinct vertices $x^{G}$ and $y^{G}$ are adjacent if $\langle x^{\prime },y^{\prime}\rangle$ is abelian for some $x^{\prime }\in x^{G}$ and $y^{\prime } \in y^{G}$. In \cite{mh-pl-mm}, they determine the connectivity of the CCC-graph of a group $G$ and give upper bounds for the diameter of the corresponding connected components of this graph.

In fact, one can generalize this graph even further by considering automorphisms.  Let $G$ be a group and take ${\rm Aut} (G)$ to be the automorphism group of $G$.  Given a subgroup $A \le {\rm Aut} (G)$ and an element $g \in G$, define $g^A = \{g^a \mid a \in A \}$ to be the {\it $A$-orbit containing $g$}.   Notice that when $A = \{ 1 \}$, then $g^A = \{ g \}$, and hen $A = {\rm Inn} (G)$, we see that $g^A = (g^G) = \{g^x \mid x \in G \}$ is the conjugacy class of $G$.  In \cite{I.G-G.E}, the authors consider a generalization of the commuting graph by considering the graph whose vertex set is $\{ g^A : 1 \ne g \in G \}$ and given distinct $x^A, y^A$, there is an edge between $x^A$ and $y^A$ if there exists $x' \in x^A$ and $y' \in y^A$ so that $x'$ and $y'$ commute.  In that paper, the authors consider when this graph has a universal vertex and when this graph is the empty graph.  When $G$ is solvable they study the connectivity of this graph and its diameter.  They also consider when this graph is triangle-free.  This graph is studied further in \cite{few edges} where they consider when this graph is connected and it contains at most one vertex whose degree is not less than three.  

In this paper, we wish to consider similar modifications to a graph that was originally studied under the name cyclic graph (see \cite{imper} and \cite{imle}), but in most papers it is called the enhanced power graph (see \cite{AACNS}, \cite{BeBhu}, \cite{BeKiMu}, and \cite{BeDe}).  We note that there is an expository paper about results on enhanced power graphs available at \cite{enhex}.  The {\it enhanced power graph} (or cyclic graph) of $G$ is defined to be the graph whose vertex set is $G \setminus \{ 1 \}$ and there is an edge between $x,y \in G \setminus \{ 1 \}$ if $\langle x, y \rangle$ is cyclic.  Following the idea of the CCC-graph, we define the {\it cylic conjugacy class graph} of $G$ to be the graph whose vertex set is the set of nontrivial conjugacy classes of $G$ and there is an edge between distinct $x^G$ and $y^G$ if there exist $x' \in x^G$ and $y' \in y^G$ so that $\langle x', y' \rangle$ is cyclic.
%(Add here something about the enhanced graph or cyclic graph. Look at the referances and omit things which are not necessary and add things which are necessary and do not appear in the previously prepared list of references.)

With this in mind, we now define the {\it automorphic cylic graph} of $G$.  We define $\Delta (G,A)$ to be the (simple) graph whose vertex set is $\{g^{A} : 1\neq g \in G \}$.  Given distinct $x^A, y^A$, there is an edge between $x^A$ and $y^A$ if there exists $x' \in x^A$ and $y' \in y^A$ so that $\langle x', y' \rangle$ is cyclic.  Observe that when $A = 1$, then $\Delta (G,1)$ is just the enhanced power graph (i.e., cyclic graph) of $G$, and when $A = {\rm Inn} (G)$, then $\Delta (G, {\rm Inn} (G))$ is the CCC-graph of $G$.  

In Section \ref{section 2}, we show that the connectivity and diameter of $\Delta (G,A)$ is similar to the connectivity of the enhanced power graph of $G$.  Here are a sample of the results we obtain:
 
\begin{introthm} \label{intro-nonpgroupcenter} 
If $G$ is a group with $A \le {\rm Aut} (G)$ and $Z (G)$ is not a $p$-group for any prime $p$, then $\Delta (G,A)$ is connected, and moreover, $\mathrm{diam} (\Delta (G,A)) \leq 4$.  
\end{introthm}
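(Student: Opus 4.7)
My plan is to designate two ``hub'' vertices in $\Delta(G,A)$ and show every vertex of the graph lies within distance $2$ of one of them. Since $Z(G)$ is not a $p$-group, $|Z(G)|$ has at least two distinct prime divisors, say $p$ and $q$; as $Z(G)$ is abelian, Cauchy's theorem lets me pick $z_p, z_q \in Z(G)$ of orders $p$ and $q$. These yield two vertices of $\Delta(G,A)$, and they are distinct because every element of the orbit $z_r^A$ has order $r$ (automorphisms preserve orders) for $r \in \{p,q\}$. Moreover $z_p$ and $z_q$ commute and have coprime orders, so $\langle z_p, z_q\rangle$ is cyclic of order $pq$; hence $z_p^A$ is adjacent to $z_q^A$.

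The main step is to prove that for every nontrivial $g \in G$ one has $d(g^A, z_q^A) \le 2$. I would split into three subcases according to how $q$ appears in $|g|$. If $q \nmid |g|$, then $g$ commutes with $z_q$ and their orders are coprime, so $\langle g, z_q\rangle$ is cyclic and $g^A$ is adjacent to $z_q^A$. If $|g|$ is a power of $q$, the symmetric argument with $z_p$ gives $g^A$ adjacent to $z_p^A$, and one more step through the hub edge puts $g^A$ within distance $2$ of $z_q^A$. In the remaining ``mixed'' case $q \mid |g|$ with $|g|$ not a $q$-power, write $q^b \,\|\, |g|$ and set $h = g^{q^b}$; then $h$ is a nontrivial element of $\langle g\rangle$ (nontrivial because $|g|$ has a prime factor other than $q$) whose order is coprime to $q$, so by the first subcase $h^A$ is adjacent to $z_q^A$, while $\langle g, h\rangle = \langle g\rangle$ is cyclic, so $g^A$ is adjacent to (or equal to) $h^A$.

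Connectedness and the bound $\mathrm{diam}(\Delta(G,A)) \le 4$ follow immediately, since any two vertices are each within distance $2$ of the common hub $z_q^A$. I do not expect a substantive obstacle: the whole engine of the argument is the elementary fact that commuting elements of coprime orders generate a cyclic group, combined with the fact that $Z(G)$ supplies commuting partners of two distinct prime orders. The only care needed is bookkeeping about orbits possibly coinciding (for instance $g^A = h^A$, which merely shortens the path) and verifying nontriviality of $h$ in the mixed case.
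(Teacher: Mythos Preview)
Your proof is correct and follows essentially the same approach as the paper: both pick central elements $z_p, z_q$ of distinct prime orders and route every vertex through these hubs via the coprime-commuting-elements lemma. Your presentation is slightly more streamlined---proving once that every vertex lies within distance $2$ of the single hub $z_q^{A}$ and then invoking the triangle inequality---whereas the paper argues directly on pairs $x^{A}, y^{A}$ with a short case split, but the underlying argument is the same.
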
 
 
When $G$ is a $p$-group, we obtain the following.

\begin{introthm} \label{intro-pgroupcomponents} 
If $G$ is a $p$-group with $A \le {\rm Aut} (G)$, where $\mathcal{C}$ is the set of all minimal subgroups, that is the set of subgroups of $G$ of order $p$ and $\mathcal{C}_{0}$ is a complete set of representatives of the $A$-orbits on $\mathcal{C}$, then there exists a one-to-one correspondence between the connected components of $\Delta (G,A)$ and $\mathcal{C}_{0}$.  Furthermore, the vertices in $\mathcal{C}_{0}$ are adjacent to all of the vertices in its connected component.
\end{introthm}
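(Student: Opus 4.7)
The plan is to introduce, for each nonidentity $g\in G$, the unique subgroup of order $p$ inside $\langle g\rangle$, call it $\mu(g)$; such a subgroup exists and is unique because $\langle g\rangle$ is a cyclic $p$-group. This $\mu$ is $A$-equivariant: $\mu(g^a)=\mu(g)^a$ for every $a\in A$, since $a$ sends $\langle g\rangle$ to $\langle g^a\rangle$ and permutes their minimal subgroups accordingly. Consequently, the assignment $x^A\mapsto \mu(x)^A$ is a well-defined map $\bar\mu$ from the vertex set of $\Delta(G,A)$ to the set of $A$-orbits on $\mathcal{C}$. This map is surjective, since for any minimal subgroup $M=\langle m\rangle\in\mathcal{C}$ one has $\bar\mu(m^A)=M^A$. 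The bijection we want is between the components of $\Delta(G,A)$ and the image of $\bar\mu$, which is exactly (in bijection with) $\mathcal{C}_0$.

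Next I would show that $\bar\mu$ is constant on each connected component. It is enough to check this on edges: suppose $x^A$ and $y^A$ are adjacent, so there exist $x'\in x^A$ and $y'\in y^A$ with $\langle x',y'\rangle$ cyclic. Because $G$ is a $p$-group, $\langle x',y'\rangle$ is a cyclic $p$-group with a unique subgroup $M$ of order $p$, and both $\langle x'\rangle$ and $\langle y'\rangle$ contain $M$, giving $\mu(x')=M=\mu(y')$. Hence $\mu(x)^A=\mu(x')^A=\mu(y')^A=\mu(y)^A$, and the claim follows.

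For the reverse direction, I would show that any two vertices with the same $\bar\mu$-image lie in a common component, and in fact are at distance at most $2$ through a vertex coming from $\mathcal{C}_0$. Fix $M\in\mathcal{C}_0$ and a generator $m$ of $M$, and let $x^A$ be any vertex with $\bar\mu(x^A)=M^A$, so that $\mu(x)=M^a$ for some $a\in A$. Replacing $x$ by $x^{a^{-1}}$ (which lies in the same $A$-orbit, so does not change the vertex) we may assume $\mu(x)=M$, i.e.\ $m\in\langle x\rangle$; then $\langle x,m\rangle=\langle x\rangle$ is cyclic, so $x^A$ is either equal to $m^A$ or adjacent to $m^A$ in $\Delta(G,A)$. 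Applying this simultaneously to two such vertices $x^A,y^A$ shows they lie in the component containing $m^A$, and moreover that $m^A$ is adjacent to every other vertex in that component, which is exactly the ``moreover'' statement.

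The main conceptual point, and the only place one really uses that $G$ is a $p$-group, is the uniqueness of the minimal subgroup inside a cyclic subgroup, which is what makes $\mu$ a well-defined single-valued function; everything else is bookkeeping about $A$-orbits. I do not anticipate any serious obstacle beyond checking the equivariance $\mu(g^a)=\mu(g)^a$ carefully and making sure the argument handles the degenerate case $x^A=m^A$ (where adjacency degenerates to equality).
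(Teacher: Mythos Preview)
Your proposal is correct and follows essentially the same approach as the paper: both arguments hinge on the fact that a nontrivial cyclic $p$-group has a unique subgroup of order $p$, and both track this minimal subgroup along a path to show it is constant up to $A$-conjugacy on each component. Your packaging via the explicit $A$-equivariant map $\bar\mu$ is a bit more streamlined and also spells out the ``furthermore'' clause (adjacency of $m^A$ to every vertex in its component) more directly, but the underlying idea is identical.
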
 

We are able to characterize these graphs for nilpotent groups.

\begin{introthm} \label{intro-nilpotent}
Let $G$ be a finite nilpotent group with $A \le {\rm Aut} (G)$.
	
\begin{itemize}
\item[(1)] 
If $G$ is a $p$-group, then the number of connected components of $\Delta (G,A)$ is the same as the number $A$-orbits on the subgroups of order $p$ of $G$. In particular, $\Delta (G,A)$ is connected if and only if $G$ is a homocyclic group, a quaternion group, or contains more than one involution and is a $2$-automorphic $2$-group. In any case, if $\Delta (G,A)$ is connected, then $\mathrm{diam}(\Delta (G,A)) \leq 2$.
		
\item[(2)] 
If $G$ is not a $p$-group, then $\Delta (G,A)$ is connected and $\mathrm{diam}(\Delta (G,A)) \leq 3$. Furthermore, if each of the Sylow subgroups of $G$ has only one $A$-orbit of minimal subgroups, then $\mathrm{diam} (\Delta (G,A))\leq 2$.
\end{itemize}
\end{introthm}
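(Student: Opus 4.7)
The plan for part~(1) is to note that its first assertion is immediate from Theorem~\ref{intro-pgroupcomponents}, and the bound $\mathrm{diam}(\Delta(G,A))\le 2$ when the graph is connected also falls out of that theorem: its ``universal vertex'' conclusion means any two distinct vertices in the unique component share a common neighbor. The ``in particular'' characterization then amounts to classifying the $p$-groups for which an automorphism subgroup can act transitively on the subgroups of order $p$. If $G$ has only one minimal subgroup, classical $p$-group theory gives either $G$ cyclic (homocyclic of rank one) or $p=2$ and $G$ generalized quaternion. Otherwise I would invoke, for $p$ odd, the known result that a $p$-group admitting an automorphism group transitive on its subgroups of order $p$ must be homocyclic, while for $p=2$ this transitivity is precisely what ``$2$-automorphic'' is designed to name. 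The main technical obstacle here is the odd-$p$ classification, which I would cite rather than reprove.

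For the diameter bound in part~(2), my plan is to exploit the Sylow decomposition $G=P_1\times\cdots\times P_k$ with $k\ge 2$; each $P_i$ is characteristic, hence $A$-invariant, and every $g\in G$ factors uniquely as $g=g_1\cdots g_k$ with $g_i\in P_i$. Since $\langle x,y\rangle = \prod_i \langle x_i,y_i\rangle$, cyclicity of $\langle x,y\rangle$ reduces to cyclicity in each Sylow factor. To prove $\mathrm{diam}(\Delta(G,A))\le 3$ I would analyze distinct vertices $x^A, y^A$ via the supports $I(x)=\{i:x_i\ne 1\}$ and $I(y)$. If some $i$ satisfies $x_i\ne 1=y_i$ (or the symmetric condition), then $x$ is adjacent to $x_i$ inside $\langle x\rangle$, and $x_i$ is adjacent to $y$ via the cyclic group $\langle x_i y\rangle$ (coprime orders, commuting factors), giving distance at most $2$. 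If $I(x)=I(y)$ has size at least two, I pick $i\in I(x)$ and set $y'=y y_i^{-1}\in\langle y\rangle$; then $y'$ is a neighbor of $y$ with strictly smaller support, and the previous case applied between $x$ and $y'$ pushes the distance up to at most $3$. If $I(x)=I(y)=\{j\}$ is a singleton, both $x$ and $y$ lie in $P_j$, and any nontrivial element of $P_\ell$ with $\ell\ne j$ is a common neighbor, yielding distance at most $2$.

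For the sharper $\mathrm{diam}\le 2$ bound under the extra hypothesis, I would fix a prime $p\mid|G|$ and a generator $z$ of a minimal subgroup of $P_p$, and exhibit $z^A$ as a universal vertex of $\Delta(G,A)$. Since $z^a\in P_p$ for every $a\in A$, the primary decomposition gives $\langle x,z^a\rangle = \prod_{i\ne p}\langle x_i\rangle \times \langle x_p,z^a\rangle$, which is cyclic precisely when $\langle x_p,z^a\rangle\le P_p$ is cyclic. This is automatic if $x_p=1$; if $x_p\ne 1$ then the unique minimal subgroup $N$ of the cyclic group $\langle x_p\rangle$ is a minimal subgroup of $P_p$, and by the transitivity hypothesis there exists $a\in A$ with $\langle z\rangle^a=N$, forcing $\langle x_p,z^a\rangle=\langle x_p\rangle$ to be cyclic. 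Thus every vertex of $\Delta(G,A)$ is adjacent to $z^A$, which yields the claim. Apart from the classification appeal in part~(1), the rest of the argument is essentially bookkeeping on primary decompositions.
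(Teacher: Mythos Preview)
Your proposal is correct. Part~(1) is handled essentially as in the paper: both you and the authors invoke Theorem~\ref{intro-pgroupcomponents} for the component count and the diameter bound, and both cite the external classification of $p$-groups with an automorphism group transitive on minimal subgroups.

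The differences are in part~(2). For the bound $\mathrm{diam}\le 3$, the paper's argument is lighter than yours: it splits into just two cases. If $x$ and $y$ are both $p$-elements, any element $z$ of some other prime order $q$ is a common neighbor by Lemma~\ref{coprimeelements}. Otherwise there are distinct primes $p\mid o(x)$ and $q\mid o(y)$, and the path $x^{A}\leftrightarrow (x^{o(x)/p})^{A}\leftrightarrow (y^{o(y)/q})^{A}\leftrightarrow y^{A}$ (again by Lemma~\ref{coprimeelements}) gives distance $\le 3$. Your support-based case analysis reaches the same conclusion but with more bookkeeping; in particular your Case~A path $x^{A}\leftrightarrow x_i^{A}\leftrightarrow y^{A}$ tacitly assumes $x\ne x_i$, which you should note collapses to a direct edge when $|I(x)|=1$.

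For the sharper bound your approach genuinely differs, and in fact proves more. The paper argues pairwise: if $(o(x),o(y))=1$ the vertices are already adjacent, while if some prime $p$ divides $\gcd(o(x),o(y))$ then the $A$-conjugacy of $\langle x^{o(x)/p}\rangle$ and $\langle y^{o(y)/p}\rangle$ provides a common neighbor. This uses the transitivity hypothesis on whichever Sylow subgroup happens to be relevant for a given pair, so it requires the hypothesis for every prime. Your argument instead exhibits a single universal vertex $z^{A}$ with $z$ of order $p$, and uses the hypothesis only for that one prime $p$. Thus you actually establish the stronger statement that $\mathrm{diam}(\Delta(G,A))\le 2$ as soon as \emph{some} Sylow subgroup has a single $A$-orbit of minimal subgroups.
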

 
%\begin{equation*}
%E=\{(X,Y)\in \binom{V}{2}:\text{There exist } x \in X \text{ and }y \in Y\text{%
%such that}\left\langle x,y\right\rangle \text{ is cyclic}\}.
%\end{equation*}

%We shall denote the graph $\Delta (G,A)$ simply by $\Delta $ if there is no reason for confusion.  Note that $\Delta (G,1)$ is the Enhanced power graph for $G$ (see (reference)).  (It has also been studied under the name Cyclic graph in (reference)).  Also, we will consider the graph $\Delta (G,{\rm Out} (G))$ under the name {\it Cyclic conjugacy class graph} in (reference). 

We will say that a vertex is a {\it universal vertex} if it is adjacent to all the other vertices in the graph.  In Section \ref{section 3}, we will discuss the universal vertices of $\Delta (G,A)$.  We will see that when $A \le {\rm Inn}(G)$, then the universal vertices for $\Delta (G,A)$ have a similar characterization as for the enhanced power graph.

\begin{introthm}\label{intro-universal p}
If $G$ is a group and $A \leq {\rm Inn} (G)$, then $\Delta (G,A)$ has a universal vertex if and only if there exists a prime $p$ dividing $|Z(G)|$ and $G$ has a cyclic or generalized quaternion Sylow $p$-subgroup.
\end{introthm}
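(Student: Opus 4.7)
My plan is to prove the two implications separately, using the assumption $A \leq {\rm Inn}(G)$ to reduce the reverse direction to a structural result about cyclic subgroups in $G$.

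For $(\Leftarrow)$, pick $z \in Z(G)$ of order $p$ (by Cauchy's theorem applied to $Z(G)$, since $p \mid |Z(G)|$). Because $A \leq {\rm Inn}(G)$ acts trivially on $Z(G)$, the orbit $z^A = \{z\}$ is a singleton vertex. For any other vertex $y^A$, the subgroup $\langle z, y \rangle$ is abelian (as $z$ is central); its $p'$-part is the cyclic group $\langle y_{p'}\rangle$, and its $p$-part $\langle z, y_p\rangle$ is an abelian subgroup of the Sylow $p$-subgroup $P$, which is cyclic (since $P$ is cyclic or generalized quaternion and the abelian subgroups of such $P$ are cyclic). So $\langle z, y\rangle$ is cyclic, giving the edge $\{z\}\!-\!y^A$, and $\{z\}$ is universal.

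For $(\Rightarrow)$, assume $g^A$ is universal. By a standard conjugation symmetry (if $\langle g', y'\rangle$ is cyclic with $g' = h^{-1}gh$, then $\langle g, (kh^{-1})^{-1}y(kh^{-1})\rangle$ is cyclic for the appropriate $H$-conjugate of $y$), universality is equivalent to: for every $y \in G \setminus (\{1\} \cup g^A)$ there exists $y' \in y^A$ with $\langle g, y'\rangle$ cyclic. I first reduce to $|g| = p$. For $p \mid |g|$, write $|g| = p^a m$ with $\gcd(p, m) = 1$ and set $z := g_p^{p^{a-1}}$, the unique element of order $p$ in $\langle g_p\rangle \subseteq \langle g\rangle$. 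I claim $z^A$ is also a universal vertex. For a vertex $y^A \neq z^A$: if $y^A \neq g^A$, the edge from $g^A$ provides $g' \in g^A$ and $y' \in y^A$ with $\langle g', y'\rangle$ cyclic, and since $z' := (g')_p^{p^{a-1}} \in z^A \cap \langle g'\rangle$, the subgroup $\langle z', y'\rangle \leq \langle g', y'\rangle$ is cyclic; if $y^A = g^A$, then $\langle z, g\rangle = \langle g\rangle$ is cyclic directly. Replacing $g$ by $z$, I may assume $|g| = p$.

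Under $|g|=p$, applying the edge condition to any order-$p$ element $y \notin g^A$ yields $y' \in y^A$ with $\langle g, y'\rangle$ cyclic; the unique subgroup of order $p$ in this cyclic group is $\langle g\rangle$ and must contain both $g$ and $y'$, so $y' \in \langle g\rangle$. Hence every order-$p$ subgroup of $G$ is $G$-conjugate (in fact $H$-conjugate) to $\langle g\rangle$. Combining this with a Sylow-theoretic analysis and the hypothesis $A \leq {\rm Inn}(G)$, one forces the Sylow $p$-subgroup $P$ of $G$ to have a unique subgroup of order $p$ (so $P$ is cyclic or generalized quaternion) and $\langle g\rangle \trianglelefteq G$. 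Finally, to obtain $g \in Z(G)$ (and hence $p \mid |Z(G)|$), one uses the embedding $N_G(\langle g\rangle)/C_G(g) \hookrightarrow (\mathbb{Z}/p\mathbb{Z})^{\times}$ together with the universality-derived constraint that $y^A \cap C_G(g) \neq \emptyset$ for every element $y$ of order coprime to $p$ (since in that case $\langle g, y'\rangle$ cyclic just means $y'$ commutes with $g$); together with $A \leq {\rm Inn}(G)$ this forces the action on $\langle g\rangle$ to be trivial, so $G = C_G(g)$.

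I expect the main obstacle to be the coupled deduction that $P$ has a unique subgroup of order $p$ and that $g \in Z(G)$: the mere $G$-conjugacy of all order-$p$ subgroups does not yield either conclusion in general (for example, in $A_4$ all three order-$2$ subgroups are conjugate yet they coexist in the unique Sylow $2$-subgroup $V_4$), so one must delicately exploit the $A \leq {\rm Inn}(G)$ hypothesis via a counting or centralizer argument tailored to the universality condition. This is the most technical piece of the proof.
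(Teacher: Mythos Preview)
Your $(\Leftarrow)$ direction is correct and essentially the same as the paper's (one should note that $z$, being central, lies in every Sylow $p$-subgroup, so $\langle z, y_p\rangle$ does sit inside a cyclic/generalized-quaternion group).

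Your $(\Rightarrow)$ direction has a genuine gap, exactly where you flag it. From ``all subgroups of order $p$ are $H$-conjugate to $\langle g\rangle$'' you cannot deduce that $P$ has a unique subgroup of order $p$ or that $\langle g\rangle \trianglelefteq G$; your own $A_4$ example shows this. The phrase ``a Sylow-theoretic analysis'' does not name an actual argument, and your subsequent centrality argument via $N_G(\langle g\rangle)/C_G(g)\hookrightarrow(\Z/p\Z)^\times$ already presupposes $\langle g\rangle\trianglelefteq G$. The ordering you propose (uniqueness in $P$, then normality, then centrality) is the wrong way around and cannot be made to work as stated.

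The missing idea is to get $g\in Z(G)$ \emph{first}. Universality gives, for every $y\in G$, some $H$-conjugate $y'$ with $\langle g,y'\rangle$ cyclic, hence $y'\in C_G(g)$; thus $G=\bigcup_{h\in H} C_G(g)^h\subseteq \bigcup_{u\in G} C_G(g)^u$, and the standard fact that no finite group is the union of conjugates of a proper subgroup forces $C_G(g)=G$. Once $g$ is central, $\langle g\rangle$ is normal, and since every order-$p$ subgroup is $H$-conjugate to $\langle g\rangle$ it must equal $\langle g\rangle$; uniqueness in $P$ and the cyclic/quaternion conclusion are then immediate. The paper does essentially this covering/counting argument, but applied to the set $\Cyc_G(x)$ rather than $C_G(x)$: this is packaged as Theorem~\ref{dominating vertex}(ii), which shows directly that $x\in K(G)$ whenever $A\leq{\rm Inn}(G)$ and $x^A$ is universal, and then everything (centrality, uniqueness, Sylow structure) follows from the known properties of $K(G)$.
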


We are also able to characterize when $\Delta (G,A)$ will be a complete graph.  When $A \le {\rm Inn} (G)$, we get the same answer as for the enhanced power graph.

\begin{introthm}\label{intro-complete for Inn}
Let $G$ be a group.  If $A \leq {\rm Inn} (G)$, then $\Delta (G,A)$ is a complete graph if and only if $G$ is cyclic group.
\end{introthm}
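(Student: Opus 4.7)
The plan is to prove both directions. The sufficiency direction ($G$ cyclic $\Rightarrow$ $\Delta(G,A)$ complete) will be immediate from the fact that every two-generated subgroup of a cyclic group is cyclic. For the necessity ($\Delta(G,A)$ complete $\Rightarrow$ $G$ cyclic), I plan to fix an element $g \in G$ of maximum order $n$, show that every element of $G$ lies in some $G$-conjugate of $\langle g \rangle$, and then invoke the classical theorem that a finite group is never the union of the $G$-conjugates of a proper subgroup.

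For the ``if'' direction: if $G$ is cyclic, then $\langle x, y \rangle$ is a subgroup of a cyclic group and hence cyclic for every $x, y \in G$. Hence for any two distinct vertices $x^A \ne y^A$ the chosen representatives $x$ and $y$ already witness adjacency, and $\Delta(G,A)$ is complete.

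For the ``only if'' direction, assume $\Delta(G,A)$ is complete with $A \le {\rm Inn}(G)$, and choose $g \in G$ of maximum order $n$. Given any $h \in G \setminus \{1\}$: if $h^A = g^A$, then since $A \le {\rm Inn}(G)$ we have $h \in g^A \subseteq g^G$, so $h = g^x$ for some $x \in G$ and $h \in \langle g \rangle^x$. Otherwise $h^A \ne g^A$, and completeness produces $g' \in g^A$ and $h' \in h^A$ with $\langle g', h' \rangle$ cyclic. Because $|g'| = n$ is the maximum element-order in $G$ and $g' \in \langle g', h' \rangle$, the cyclic group $\langle g', h' \rangle$ has order exactly $n$ and coincides with $\langle g' \rangle$; in particular $h' \in \langle g' \rangle$. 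Writing $g' = g^x$ and $h' = h^y$ for appropriate $x, y \in G$ (possible since $A \le {\rm Inn}(G)$) then yields $h \in \langle g \rangle^{xy^{-1}}$.

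Combining the cases gives $G = \bigcup_{x \in G} \langle g \rangle^x$, and by the classical covering lemma $\langle g \rangle = G$, so $G$ is cyclic. The two essential ingredients are the containment $A \le {\rm Inn}(G)$, which forces each $A$-orbit to lie inside a $G$-conjugacy class, and the maximality of $|g|$, which collapses $\langle g', h' \rangle$ onto $\langle g' \rangle$; I do not foresee any serious obstacle beyond these two observations.
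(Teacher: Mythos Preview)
Your proof is correct, and it takes a genuinely different (and more elementary) route than the paper's.

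The paper argues as follows: first it invokes Corollary~\ref{D(G,A) is a complete graph} (whose proof ultimately rests on an external reference) to conclude that $G$ is nilpotent; then, using Theorem~\ref{dominating vertex}(ii), it observes that when $A\le{\rm Inn}(G)$ every universal vertex must lie in the cyclic characteristic subgroup $K(G)$, so a complete graph forces $G=K(G)$; the abelian case is finished off via the connectivity results for $p$-groups. Your argument bypasses all of this machinery: by choosing $g$ of maximal order and using $A\le{\rm Inn}(G)$ to pass from $A$-orbits to $G$-conjugacy, you show directly that $G=\bigcup_{x\in G}\langle g\rangle^{x}$ and conclude with the classical fact that a finite group is never the union of the conjugates of a proper subgroup. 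This is self-contained and avoids the nilpotency step, the $K(G)$ theory, and the external citations; on the other hand, the paper's route has the virtue of illustrating how Theorems~\ref{dominating vertex} and~\ref{p-element and q-element} feed into the classification, which is thematically appropriate in context.
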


For a general $A$, we get a more interesting answer.

\begin{introthm} \label{intro-complete for general A}
Let $G$ be a group and let $A \le {\rm Aut} (G)$.   Then $\Delta (G,A)$ is a complete graph if and only if $G$ is nilpotent and every element $x$ of $G$ having $p$-power order is $A$-conjugate to all elements of order $o(x)$ in $G$ for every prime $p$ dividing $|G|$.
\end{introthm}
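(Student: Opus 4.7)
The plan is to treat the two directions of the equivalence separately.

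For the forward implication, suppose $\Delta(G,A)$ is complete. I would first show that $G$ is nilpotent by proving that the Sylow subgroups pairwise centralize. For any two elements $u,v\in G$ of coprime prime-power orders $p^a$ and $q^b$, the orbits $u^A$ and $v^A$ must be adjacent, so one finds $u'\in u^A$ and $v'\in v^A$ with $\langle u',v'\rangle$ cyclic, and therefore commuting. Since $A$ permutes the Sylow subgroups of $G$ and since this holds for every such pair, I would combine these commutation relations with Sylow theory to conclude that each Sylow subgroup is normal in $G$, giving nilpotency. Once $G$ is nilpotent, I would fix $x\in G$ of $p$-power order $p^a$ and consider any $y\in G$ of the same order; if $x^A\neq y^A$, edge-adjacency inside the Sylow $p$-subgroup forces $\langle x'\rangle=\langle y'\rangle$ for appropriate $x'\in x^A$ and $y'\in y^A$, so $x'$ and $y'$ generate the same cyclic group of order $p^a$. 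I would then argue, using completeness applied to all such pairs, that $y$ lies in $x^A$.

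For the backward implication, suppose $G$ is nilpotent and the orbit-transitivity hypothesis holds. Given vertices $x^A, y^A$ with $x,y\neq 1$, write $x=\prod_i x_i$ and $y=\prod_i y_i$ via the direct-product decomposition of $G$ into its Sylow subgroups $P_i$, where $x_i,y_i\in P_i$. For each $i$, let $a_i, b_i\geq 0$ be defined by $o(x_i)=p_i^{a_i}$ and $o(y_i)=p_i^{b_i}$, set $m_i=\max(a_i,b_i)$, and choose an element $z_i\in P_i$ of order $p_i^{m_i}$, which exists since either $x_i$ or $y_i$ has this order. The element $z_i^{p_i^{m_i-a_i}}\in\langle z_i\rangle$ has order $p_i^{a_i}$ and is, by the hypothesis applied inside $P_i$, an $A$-conjugate of $x_i$; similarly $z_i^{p_i^{m_i-b_i}}$ is an $A$-conjugate of $y_i$. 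Assembling these pieces across all primes, I would produce $x'\in x^A$ and $y'\in y^A$ whose Sylow parts both lie in the common cyclic subgroup $\langle z_i\rangle\leq P_i$ for each $i$. Since $\prod_i\langle z_i\rangle$ is cyclic in the nilpotent group $G$, the subgroup $\langle x',y'\rangle$ is cyclic, exhibiting the required edge.

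The main obstacle is the coordination step in the backward direction: the hypothesis gives, for each prime $p_i$ separately, an element of $A$ whose action on $P_i$ moves $x_i$ to the required target in $P_i$, but what is really needed is a single $\sigma\in A$ whose induced actions on the various $P_i$ simultaneously realize all of the chosen moves. I plan to exploit that the Sylow subgroups of the nilpotent group $G$ are characteristic, so that $A$ acts componentwise on $G=\prod_i P_i$, and then verify that the $A$-orbit of the tuple $(x_1,\ldots,x_k)$ in $\prod_i P_i$ surjects onto every tuple of elements of the prescribed orders; the argument will use the per-prime transitivity given by the hypothesis together with a careful analysis of the kernels of the restriction maps $A\to \mathrm{Aut}(P_i)$. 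In the forward direction, the subtler point is upgrading the local conclusion $\langle x'\rangle=\langle y'\rangle$ into the global orbit equality $x^A=y^A$, which I anticipate will require applying completeness not only to the single pair $(x,y)$ but to all pairs $(x,y^k)$ with $\gcd(k,p)=1$, and piecing the resulting orbit identifications together.
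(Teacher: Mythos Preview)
Your overall plan mirrors the paper's: in the backward direction both you and the paper decompose into Sylow components and try to assemble a single automorphism from per-prime pieces; in the forward direction the paper invokes its nilpotency corollary and Theorem~\ref{dominating vertex}(i), while you argue nilpotency directly and then compare $\langle x'\rangle$ with $\langle y'\rangle$ for elements of equal $p$-power order. You are right to flag the two ``obstacles''---but they are fatal, not merely technical. The equivalence as stated fails in both directions, so neither your proposed resolutions nor the paper's glosses can be completed.

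For the forward direction, take $G=\mathbb{Z}_9$ and $A=1$: then $\Delta(G,1)$ is the enhanced power graph of a cyclic group and hence complete, yet the two elements of order $3$ lie in distinct singleton $A$-orbits. Completeness only ever yields $\langle x'\rangle=\langle y'\rangle$, and no amount of running over the powers $y^k$ will upgrade this to $x^A=y^A$ when $A=1$. For the backward direction, take $G=\mathbb{F}_9^{\,+}\times\mathbb{F}_{25}^{\,+}$ and $A=\langle(\sigma,\tau)\rangle$, where $\sigma,\tau$ are Singer cycles given by multiplication by fixed primitive roots $\zeta_3\in\mathbb{F}_9^\times$ and $\zeta_5\in\mathbb{F}_{25}^\times$; then $|A|=\mathrm{lcm}(8,24)=24$ and $A$ is transitive on the nonidentity elements of each Sylow factor, so the hypothesis holds. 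But $g=(1,1)$ and $h=(\zeta_3,1)$ give non-adjacent orbits: for $\langle g^{(\sigma,\tau)^k},\,h^{(\sigma,\tau)^l}\rangle$ to be cyclic one needs $l-k\equiv 3$ or $7\pmod 8$ on the first factor (so $l-k$ is odd) and $l-k\equiv 0\pmod 6$ on the second (so $l-k$ is even), a contradiction. This is precisely the failure of your coordination step; the paper's line ``let $a=\prod a_i$'' breaks for the same reason, since that product need not lie in $A$.
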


Recall that an {\it empty graph} is a graph with no edges.  In Section \ref{section 4}, we consider the question of when $\Delta (G,A)$ is an empty graph.  We prove the following. 

\begin{introthm} \label{intro-NoEdges} 
Let $G$ be a group with $A \le {\rm Aut} (G)$.  Then $\Delta (G,A)$ is the empty graph if and only if every nonidentity element $x \in G$ has prime order, $N_{A}(\left\langle x\right\rangle )$ acts transitively on $\left\langle x \right\rangle \setminus \{ 1 \}$ for every $1 \neq x \in G$, and one of the following holds:
\begin{enumerate}
\item $G$ is a $p$-group for some prime $p$ and is of exponent $p$, 
\item $G$ is a Frobenius group with Frobenius kernel of prime exponent and a Frobenius complement of prime order, 
\item $G$ is isomorphic to $A_{5}$ and if $A \leq {\rm Aut} (A_{5})$, then either $A \cong S_{4}$ or $S_{5}$.
\end{enumerate}
\end{introthm}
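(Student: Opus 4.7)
The plan is to prove both implications. For the forward direction, assume $\Delta(G,A)$ has no edges and extract structural consequences. First I would show every non-identity element $x\in G$ has prime order: if $x$ has order $n$ with a proper prime divisor $p$, then $x^p$ has order $n/p<n$, so $x^A\neq(x^p)^A$, while $\langle x,x^p\rangle=\langle x\rangle$ is cyclic, producing a forbidden edge. Next, for any $y\in\langle x\rangle\setminus\{1\}$, the subgroup $\langle x,y\rangle\leq\langle x\rangle$ is cyclic, so emptiness forces $y=x^a$ for some $a\in A$; since $y$ also generates $\langle x\rangle$ (both have prime order), $a$ normalizes $\langle x\rangle$, giving transitivity of $N_A(\langle x\rangle)$ on $\langle x\rangle\setminus\{1\}$.

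Having established that every element of $G$ has prime order, I would invoke the classification of finite CP-groups (groups in which every non-identity element has prime order), due to Higman, Deaconescu, and Suzuki: such a group is a $p$-group of exponent $p$, a Frobenius group with kernel of prime exponent and complement of prime order, or $A_5$. This yields the three structural alternatives.

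The most delicate step is pinning down $A$ when $G=A_5$. For $x$ of order $5$ the transitivity condition requires $N_A(\langle x\rangle)$ to realize all of $\mathrm{Aut}(\langle x\rangle)\cong C_4$. Since $N_{A_5}(\langle x\rangle)/C_{A_5}(x)\cong C_2$ induces only the inversion automorphism, $A$ must meet $N_{S_5}(\langle x\rangle)\setminus A_5$ for every Sylow $5$-subgroup $\langle x\rangle$ of $S_5$. Combined with the requirement that $A$ invert every $3$-cycle of $A_5$, a case-by-case analysis of subgroups of $\mathrm{Aut}(A_5)=S_5$ eliminates all possibilities except $A\cong S_4$ or $A\cong S_5$. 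This subgroup enumeration is the main technical obstacle.

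For the reverse direction, assume the stated conditions hold and consider distinct orbits $x^A\neq y^A$. If an edge existed, some $x'\in x^A$ and $y'\in y^A$ would satisfy $\langle x',y'\rangle$ cyclic. In each of the three structural cases, $G$ contains no element of composite order: case (1) has exponent $p$; in case (2), every element either lies in the kernel (order $p$) or is conjugate to a complement element (order $q$), so no element has order $pq$; and $A_5$ has element orders only $1,2,3,5$. Hence $\langle x',y'\rangle$ is cyclic of prime order, forcing $y'\in\langle x'\rangle\setminus\{1\}$; the transitivity hypothesis then yields $y'\in(x')^A=x^A$, contradicting $y'\in y^A\neq x^A$, so $\Delta(G,A)$ has no edges.
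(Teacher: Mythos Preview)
Your plan follows essentially the same route as the paper: derive the prime-order and transitivity conditions from the absence of edges (the paper packages these into its Lemma on isolated vertices), invoke the CP-group classification, and then analyze the $A_5$ case separately; your reverse direction is in fact cleaner and more explicit than the paper's.

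The one place you leave as a sketch---the elimination of subgroups $A\le S_5$ other than $S_4$ and $S_5$---is where the paper concentrates its effort, and it does not proceed by bare enumeration. Instead it observes (as you do) that transitivity on a Sylow $5$-subgroup forces $A$ to contain a cyclic subgroup $T\cong C_4$, then splits on whether a Sylow $2$-subgroup $B$ of $A$ equals $T$ or is the full $D_8$. In the $B=T$ case, $A$ is $2$-nilpotent, hence $A=T$ or $A=N_{S_5}(H)$ for a Sylow $5$-subgroup $H$, and both fail the condition on the remaining Sylow $5$-subgroups. In the $B\cong D_8$ case the overgroups of $B$ in $S_5$ are only $B$, $S_4$, and $S_5$, and $A=B$ is ruled out because its unique $C_4$ normalizes only two of the six Sylow $5$-subgroups. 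The paper also checks directly that $A\cong S_4$ satisfies the transitivity condition on every cyclic subgroup of $A_5$; your reverse-direction argument does not need this for logical correctness, but it is what shows case~(3) with $A\cong S_4$ is actually realized.
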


For the enhanced power graph and the cyclic conjugacy class graph, we obtain the following.

\begin{introthm}\label{intro-last}
Let $A \leq {\rm Inn}(G)$.  Then $\Delta (G,A)$ is the empty graph if and only if either $G$ is an elementary abelian $2$-group or is a Frobenius group with Frobenius kernel that is an elementary abelian $3$-group and Frobenius complement of order $2$.
\end{introthm}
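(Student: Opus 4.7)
The plan is to apply Theorem~\ref{intro-NoEdges} to obtain the three structural cases, and then to exploit $A \le \mathrm{Inn}(G)$ to cut these down to the two claimed families. The key observation is that when $A \le \mathrm{Inn}(G)$, the action of $N_A(\langle x \rangle)$ on $\langle x \rangle$ is induced by conjugation by elements of $G$, so its image in $\mathrm{Aut}(\langle x \rangle)$ is a section of $G/Z(G)$, hence is tightly constrained by the group-theoretic structure of $G$.

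For the forward direction, suppose $\Delta(G,A)$ is empty. By Theorem~\ref{intro-NoEdges}, every nonidentity element of $G$ has prime order, $N_A(\langle x \rangle)$ acts transitively on $\langle x \rangle \setminus \{1\}$ for every $1 \neq x \in G$, and $G$ falls into one of three families. If $G$ is a $p$-group of exponent $p$, then for $x$ of order $p$ the image of $N_A(\langle x \rangle)$ in $\mathrm{Aut}(\langle x \rangle) \cong (\mathbb{Z}/p)^{\times}$ is a $p$-group; since $|(\mathbb{Z}/p)^\times| = p-1$ is coprime to $p$ for odd $p$, this image is trivial, and transitivity on $p-1$ elements forces $p = 2$, so $G$ is elementary abelian. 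If $G \cong A_5$, then for $x$ of order $5$ we have $N_{A_5}(\langle x\rangle) \cong D_{10}$, whose image in $\mathrm{Aut}(\langle x\rangle) \cong \mathbb{Z}/4$ is only the inversion subgroup of order $2$, which cannot be transitive on the four generators of $\langle x\rangle$; this case is ruled out.

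The Frobenius case is the crux of the argument. Write $G = K \rtimes H$ with $K$ of prime exponent $q$ and $|H| = p$ prime. For $x \in H$, the self-normalization $N_G(H) = H$ of Frobenius complements together with the abelianness of $H$ shows that $N_A(H)$ acts trivially on $H$, so transitivity on $H \setminus \{1\}$ forces $p = 2$. For $x \in K$ of order $q$, since $|H| = p$ is prime, $N_G(\langle x\rangle)$ is either $K$ or all of $G$; in the first case the action is trivial and transitivity would give $q = 2$, which is incompatible with $\gcd(|K|,|H|) = 1$. Thus $\langle x \rangle$ is normal in $G$, and any $h \in H \setminus \{1\}$ acts on $\langle x \rangle$ without fixed points and with order dividing $p = 2$, hence by inversion. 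The induced image in $\mathrm{Aut}(\langle x\rangle) \cong (\mathbb{Z}/q)^\times$ therefore has order exactly $2$, and transitivity on the $q-1$ generators forces $q - 1 = 2$, so $q = 3$.

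For the converse, if $G$ is elementary abelian of exponent $2$, any two distinct nontrivial elements generate a Klein four group, never cyclic, so $\Delta(G,A)$ is empty for every $A$. If $G$ is Frobenius of the stated form and $A = \mathrm{Inn}(G)$, one checks that $(kh)^2 = k \cdot (hkh^{-1}) = 1$ for $k \in K$ and $1 \ne h \in H$, so $G$ has no elements of order $6$; all involutions of $G$ are $G$-conjugate; and for $x \in K$, the elements $x$ and $x^{-1}$ are $G$-conjugate because $H$ inverts $K$. Hence distinct conjugacy classes of $G$ contain no pair that generates a cyclic subgroup, and $\Delta(G, \mathrm{Inn}(G))$ is empty. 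The main obstacle is the Frobenius case, where one must balance the transitivity constraint at elements of both the kernel and the complement and translate these into the arithmetic requirement $p = q - 1$ with $p,q$ both prime.
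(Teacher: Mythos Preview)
Your forward direction follows the paper's approach: both invoke Theorem~\ref{intro-NoEdges} and then use $A \le \mathrm{Inn}(G)$ to eliminate $A_5$ and pin down the primes. Your arguments are more detailed than the paper's terse treatment and are essentially correct, but there is one small omission in the Frobenius case. Before asserting that $N_G(\langle x\rangle)$ is either $K$ or all of $G$ you need $K \le N_G(\langle x\rangle)$, which requires $K$ to be abelian; this follows because once $|H|=2$ the Frobenius involution acts fixed-point-freely on $K$ and hence by inversion. The paper makes this step explicit (``This implies $F$ is abelian''), and you should too, since without it neither the dichotomy on $N_G(\langle x\rangle)$ nor the claim that conjugation by $K$ acts trivially on $\langle x\rangle$ is justified.

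For the converse you prove only the case $A = \mathrm{Inn}(G)$, whereas the theorem is stated for all $A \le \mathrm{Inn}(G)$. The paper simply asserts the converse without argument. In fact the converse as stated is \emph{false}: if $G$ is the Frobenius group with elementary abelian $3$-kernel $K$ and complement of order $2$, and $A = 1$, then $\Delta(G,1)$ contains the edge $\{x, x^{-1}\}$ for every $x \in K$ of order $3$. So your restriction to $A = \mathrm{Inn}(G)$ is not a defect of your argument but a symptom of an imprecision in the theorem; a corrected converse in the Frobenius case would require that $A$ contain conjugation by some involution of $G$, and your computation of the $G$-conjugacy classes is exactly what is needed to verify that amended statement.
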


%---------------------------------------------------------------------------
\section{{\protect\Large On the connectivity of }$\Delta (G,A)$}\label{section 2}

In this section we shall discuss some issues related to connectivity of the graph $\Delta = \Delta (G,A)$.  The question of connectivity of this graph is of course closely related to the connectivity of $\Delta (G,1)$ and the Gruenberg-Kegel graph of $G$.  As such, we will see that the results in this sections are generalizations of similar results for the enhanced power graphs, and thus, many of the proofs are similar to the proofs of results in \cite{direct}.  This next Lemma is essentially Lemma 2.1 of \cite{direct} and the proof is similar enough that we do not repeat it here.

\begin{lemma} \label{coprimeelements} 
If $G$ is a group and $A \le {\rm Aut} (G)$ and $x, y \in G\setminus \{1\}$ such that $xy = yx$ and $(|x|,|y|)=1$, then $x^{A}$ is adjacent to $y^{A}$ in $\Delta (G,A)$.
\end{lemma}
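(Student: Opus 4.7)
The statement is essentially a transplantation of the standard fact that commuting elements of coprime order generate a cyclic subgroup. My plan is therefore to verify (i) that $x^A$ and $y^A$ are genuinely distinct vertices, so that edge-adjacency in the simple graph $\Delta(G,A)$ makes sense, and (ii) that one can exhibit the required pair of representatives $x' \in x^A$, $y' \in y^A$ with $\langle x', y'\rangle$ cyclic. Both should be essentially immediate.

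For (i), observe that since $\gcd(|x|,|y|)=1$ and both elements are nontrivial, the orders $|x|$ and $|y|$ are distinct. Any element of $A \le \mathrm{Aut}(G)$ preserves the order of a group element, so every element of $x^A$ has order $|x|$ and every element of $y^A$ has order $|y|$. Hence $x^A \cap y^A = \emptyset$, and in particular $x^A \neq y^A$.

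For (ii), take $x' = x$ and $y' = y$ themselves. From $xy=yx$ and $\gcd(|x|,|y|)=1$ one gets $|xy| = |x|\cdot|y|$, and a standard Bezout-type argument recovers $x$ and $y$ as powers of $xy$, giving $\langle x, y\rangle = \langle xy\rangle$. Thus $\langle x', y'\rangle$ is cyclic and $x^A$ is adjacent to $y^A$ in $\Delta(G,A)$, as required.

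There is no real obstacle here; the only mild point to mention is the order-preservation used in step (i), which is the reason why the coprimality hypothesis transfers cleanly from the enhanced power graph setting of \cite{direct} to the automorphism-orbit setting of $\Delta(G,A)$.
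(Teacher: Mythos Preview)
Your proof is correct; the paper itself does not write out a proof of this lemma, instead remarking that it is essentially Lemma~2.1 of \cite{direct} with a proof similar enough not to repeat. Your argument---checking that $x^A\neq y^A$ via order preservation under automorphisms, and then observing that $\langle x,y\rangle=\langle xy\rangle$ is cyclic because commuting elements of coprime order generate a cyclic group---is exactly the standard argument the paper is alluding to.
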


Compare the following Lemma with Theorem C of \cite{direct}.

\begin{lemma} \label{connection 1} 
If $G$ is a group with $A \le {\rm Aut} (G)$ and $x, y \in G \setminus \{1\}$ are elements of prime order $p$ such that $x^{A}$ and $y^{A}$ lie in the same connected component of $\Delta (G,A)$.  Then either $C_{G}(x)$ and $C_{G}(y)$ are not $p$-groups or $\left\langle x \right\rangle = \left\langle y\right\rangle ^{a}$ for some $a\in A$.  In particular, $\Delta (G,A)$ is disconnected if one of the following holds:

\begin{itemize}
\item  $G$ is not a $p$-group and there exists an element $x\in G$ with order $p$, such that $C_{G}(x)$ is a $p$-group, or

\item  $G$ is a $p$-group and $A$ does not act transitively on the set of subgroups of order $p$ of $G$.
\end{itemize}
\end{lemma}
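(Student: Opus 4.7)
The plan is to strengthen the statement and prove it by induction on distance in $\Delta(G,A)$. Specifically, I would establish the following: if $x \in G$ has order $p$, $C_G(x)$ is a $p$-group, and $z^A$ lies in the connected component of $x^A$ in $\Delta(G,A)$, then $z$ has $p$-power order, $C_G(z)$ is a $p$-group, and the unique subgroup of order $p$ contained in $\langle z \rangle$ is an $A$-conjugate of $\langle x \rangle$. The base case (where $z \in x^A$) is immediate, since then $\langle z \rangle$ and $C_G(z)$ are $A$-conjugates of $\langle x \rangle$ and $C_G(x)$ respectively.

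For the inductive step, suppose $z^A$ is adjacent to some $w^A$ strictly closer to $x^A$, and assume the claim for $w$. Choose representatives $w' \in w^A$ and $z' \in z^A$ with $\langle w', z' \rangle$ cyclic. Then $z'$ commutes with $w'$, so $z' \in C_G(w')$, an $A$-conjugate of the $p$-group $C_G(w)$; hence $z'$, and therefore $z$, has $p$-power order. The cyclic $p$-group $\langle w', z' \rangle$ contains a unique subgroup of order $p$, which forces the subgroups of order $p$ in $\langle w' \rangle$ and $\langle z' \rangle$ to coincide. Transporting this equality through the automorphisms relating $w$ to $w'$ and $z$ to $z'$ places the subgroup of order $p$ in $\langle z \rangle$ into the $A$-orbit of the subgroup of order $p$ in $\langle w \rangle$, which by induction is the $A$-orbit of $\langle x \rangle$. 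Finally, writing $U$ for the common subgroup of order $p$ in $\langle z' \rangle$, one has $U \le \langle z' \rangle$, so $C_G(z') \le C_G(U)$; since $U$ is an $A$-conjugate of $\langle x \rangle$, its centralizer is an $A$-conjugate of the $p$-group $C_G(x)$, and consequently $C_G(z)$ is a $p$-group.

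The lemma itself then follows by applying the strengthened statement to whichever of $x, y$ has a $p$-group centralizer; since the other endpoint also has order $p$, its cyclic subgroup coincides with its own subgroup of order $p$, forcing $\langle x \rangle$ and $\langle y \rangle$ into a common $A$-orbit of subgroups. Both ``in particular'' clauses fall out immediately: in the mixed-prime case, any element $y$ of prime order $q \neq p$ lying in the component of $x^A$ would be forced to have $p$-power order, a contradiction; in the $p$-group case every centralizer in $G$ is automatically a $p$-group, so distinct $A$-orbits of minimal subgroups cannot share a component. I expect the main technical obstacle to lie in the orbit bookkeeping inside the inductive step: the correct invariant to propagate along the path is not an element or the cyclic subgroup generated by an element, but the $A$-orbit of the subgroup of order $p$ inside $\langle z \rangle$, and one must track carefully how this invariant transports through the automorphisms that relate the various orbit representatives.
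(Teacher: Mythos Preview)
Your proposal is correct and takes essentially the same approach as the paper: both walk along a path in $\Delta(G,A)$ and exploit that a cyclic $p$-group has a unique subgroup of order $p$, forcing this minimal subgroup to remain constant (up to $A$-conjugacy) along the path so long as every vertex is a $p$-element. The only packaging difference is that the paper first fixes a coherent chain of representatives $x_0=x,\,x_1,\dots,x_{n+1}=y^a$ with each $\langle x_i,x_{i+1}\rangle$ cyclic, so the minimal subgroup is literally equal along the chain, whereas you avoid choosing such a chain and instead propagate the stronger invariant ``$C_G(z)$ is a $p$-group and the order-$p$ subgroup of $\langle z\rangle$ lies in the $A$-orbit of $\langle x\rangle$'' by induction on distance; this buys you a cleaner treatment of the orbit bookkeeping at the cost of a slightly longer inductive hypothesis.
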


\begin{proof}
Suppose that $x, y \in G\setminus \{ 1 \}$ are elements of prime order $p$ and suppose that there is a path $X_{0} \leftrightarrow X_{1} \leftrightarrow \cdots \leftrightarrow X_{n} \leftrightarrow X_{n+1}$ joining the vertices $x^{A} = X_{0}$ and $y^{A} = X_{n+1}$.  Then there exists representatives of these $A$-orbits, say $x_{i}\in X_{i}$ such that $x_{0} = x$, $\left\langle x_{i}, x_{i+1} \right\rangle $ is cyclic, and $x_{n+1} = y^{a}$ for some $a \in A$, with $i = 0, 1, \ldots, n, n+1$.  If possible, let $k$ be the smallest integer such that $x_{k}$ has order divisible by some prime $q \neq p$.  Then, for $i$ with $i+1 < k$ we see that $\left\langle x_{i}, x_{i+1} \right\rangle $ is a cyclic $p$-group, and hence, there is a unique subgroup $Z$ of order $p$ which has to be a subgroup of $\left\langle x_{i} \right\rangle \cap \left\langle x_{i+1}\right\rangle$.  This implies that $Z = \left\langle x \right\rangle$ and that $x_{k} \in C_{G}(x)$.  Therefore, if $C_{G}(x)$ is a $p$-group, then $Z = \left\langle x \right\rangle = \left\langle y^{a}\right\rangle $ for some $a\in A$.  This proves the claim.
\end{proof}

Compare the following with Lemma 3.1 of \cite{direct}.

%\begin{theorem} \label{nonpgroupcenter} 
%If $G$ is a group with $A \le {\rm Aut} (G)$ and $Z (G)$ is not a $p$-group for any prime $p$, then $\Delta (G,A)$ is %connected, and moreover, $\mathrm{diam} (\Delta (G,A)) \leq 4$.  
%\end{theorem}

\begin{proof}[Proof of Theorem \ref{intro-nonpgroupcenter}]
Let $z_{p},z_{q}\in Z(G)$ be elements of order $p$ and $q$, respectively, where $p$ and $q$ are distinct primes. Also, let $x, y \in G$ be arbitrary elements with $|x| = m$ and $|y| = n$ for positive integers $m$ and $n$ greater than $1$.

Suppose there exist a prime $r \in \{p,q\}$ so that $r$ does not divide $m$ and $n$, then $x^{A} \leftrightarrow (z_{r})^{A} \leftrightarrow y^{A}$, and hence, $d (x^{A}, y^{A}) \leq 2$.  Thus, we may assume that each prime in $\{ p, q \}$ divides at least one of the members of $\{ m, n \}$.  Without loss of generality, we may assume that $p \mid m$.  If $q$ does not divide $n$, then we obtain $x^{A} \leftrightarrow (x^{\frac{m}{p}})^{A} \leftrightarrow (z_{q})^{G} \leftrightarrow y^{G}$ and $d (x^{A}, y^{A}) \leq 3$.  So, we may assume $q$ divides $n$.  Changing the roles of $p$ and $q$, we similarly  see that we also may assume that $p \mid n$ and $q \mid m$.  Thus, we have $x^{A} \leftrightarrow (x^{\frac{m}{p}})^{A} \leftrightarrow (z_{q})^{A} \leftrightarrow (y^{\frac{n}{p}})^{A} \leftrightarrow y^{A}$, and thus, $d (x^{A}, y^{A}) \leq 4$. Therefore, we conclude that $\mathrm{diam} (\Delta (G,A)) \leq 4$.
\end{proof}

We note that the bound in Theorem \ref{intro-nonpgroupcenter} is sharp.  Consider the group $G = Z \times FH$ where $Z \cong H \cong \mathbb{Z}_{6}$ and $F \cong \mathbb{Z}_{7}$, so that $FH$ is a Frobenius group of order $42$.  Let $H = \left\langle x\right\rangle$ and $F= \left\langle u \right\rangle $ and $y = x^{u}$.  If we take $A=1$, then we see that $d (x,y) = 4$ in $\Delta (G,1)$, although $d(x^{Inn(G)},y^{Inn(G)}) = 0$.  
%TCIMACRO{\U{2124} }%
%BeginExpansion

%EndExpansion

%TCIMACRO{\U{2124} }%
%BeginExpansion

%EndExpansion

We now consider the connected components of $\Delta (G,A)$ when $G$ is a $p$-group.  One should compare this theorem with Lemma 2.3 of \cite{direct}.

%\begin{theorem} \label{pgroupcomponents} 
%If $G$ is a $p$-group with $A \le {\rm Aut} (G)$, where $\mathcal{C}$ is the set of all minimal subgroups, that is the set of subgroups of $G$ of order $p$ and $\mathcal{C}_{0}$ is a complete set of representatives of the $A$-orbits on $\mathcal{C}$, then there exists a one-to-one correspondence between the connected components of $\Delta (G,A)$ and $\mathcal{C}_{0}$.  Furthermore, the vertices in $\mathcal{C}_{0}$ are adjacent to all of the vertices in its connected component.
%\end{theorem}

\begin{proof} [Proof of Theorem \ref{intro-pgroupcomponents}]
We want to prove that for any connected component $O$ of $\Delta (G,A)$ here exists a unique element $S$ of $\mathcal{C}_{0}$ such that 
$$S \setminus \{1\} \subset \widetilde{O} := \bigcup \nolimits_{Z \in O} Z.$$
Let $x^{A} \in O$ for some $1 \neq x \in G$. Then $y^{A}$ is adjacent to $x^{A}$ for any $1 \neq y \in \langle x \rangle$. In particular, if $X$ is the unique subgroup of order $p$ in $\langle x\rangle$,  we see that $X \setminus \{1\} \subset \widetilde{O}$.   Hence, there exists an element $S$ in $\mathcal{C}_{0}$ and an automorphism $a \in A$ such that $X^{a}=S$.  As $O^{a}=O$, we see that for each connected component $O$ of $\Delta (G,A)$, there exists a subgroup $S$ of $G$ of order $p$ in $\mathcal{C}_{0}$ such that $S \setminus \{ 1 \} \subset \widetilde{ O}$.  In order to prove the claim, we need to prove that this set $S$ is uniquely determined by $O$. Let $P = \langle x \rangle$ and $Q = \langle y \rangle$ two distinct subgroups of $G$ of order $p$ such that $x^{A}$ and $y^{A}$ lie in the same connected component $O$. Then there exists an automorphism $a \in A$ and a sequence $x_{0}=x, x_{1}, x_{2}, \dots, x_{n}, x_{n+1}=y^{a}$ of nonidentity elements of $G$ such that $\langle x_{i}, x_{i+1} \rangle$ is cyclic for each $i = 0, 1, \dots, n$.  Since $\langle x_{i}, x_{i+1} \rangle$ is a nontrivial cyclic $p$-group, it contains a unique subgroup $Z$ of order $p$. But then $Z \leq \langle x_{i} \rangle$ and $Z \leq \langle x_{i+1} \rangle $ for every $i = 0, 1, \dots, n$ implying that $\langle x_{0}\rangle = Z = \langle x_{n+1}\rangle$, that is $\langle x \rangle =\langle y^{a}\rangle$ and hence, $P = Q^{a}$ for some $a \in A $. This completes the proof.
\end{proof}

%Compare this theorem to

%\begin{theorem} \label{connectedcomponentpgroup} 
%Suppose $G$ is a $p$-group for some prime $p$ and $A \le {\rm Aut} (G)$.  If $O$ is a connected component of $\Delta (G,A)$, then the vertices $x^{A}\in O$ corresponding to minimal subgroups $\left\langle x\right\rangle$ of $G$ are adjacent to every vertex of $O$.
%\end{theorem}

%\begin{proof}
%Aiming for a contradiction, suppose that $x^{A}$ is not adjacent to every vertex in $O$. Then there exist elements $u, y \in G$ such that $u^{A}, y^{A}\in O$, so that $y^{A}$ is not adjacent to $x^{A}$ and $x^{A}\leftrightarrow u^{A} \leftrightarrow y^{A}$.  Furthermore, we may assume without loss of generality that the subgroup $\langle x, u \rangle $ is cyclic and therefore, it has a unique subgroup of order $p$, namely $\langle x \rangle$. Note that $\langle u \rangle$ also has a unique subgroup of order $p$.  It follows that $\langle x \rangle \leq \langle u\rangle$ as $\langle u\rangle \leq \langle x,u\rangle$.  Now, $\langle u,y^{a}\rangle $ is cyclic for some automorphism $a\in A$, and $x \in \left\langle u,y^{a} \right\rangle$.  Therefore, the subgroup $\langle x, y^{a}\rangle$ is also cyclic which yields that $x^{A}$ is adjacent to $y^{A}$, a contradiction.
%\end{proof}

\begin{example}
\textbf{\ }
Suppose $G$ is a $p$-group for some prime $p$.  We note that the connected components of $\Delta (G,A)$ need not be cliques, that is, complete graphs.  To see this, consider the group $G$ of order $27$, as follows:
$$ G = \gen{ a,b \mid a^{9} = b^{3} = 1, [a,b] = a^{3} }.$$
Let $\tau$ be the automorphism of $G$ such that $a^{\tau }=a^{-1}$ and $b^{\tau }=b$. Assume that $A = {\rm Inn(G) } \gen{\tau}$. Then $\Delta (G,A)$ has vertices represented by the elements: $a^{3},b,b^{2},a,ba,b^{2}a,$ and is not connected; it has two connected components which consist of vertices represented by $b,b^{2}$ and by $a^{3},a,ba,b^{2}a$. The first connected component is a clique, but the second connected component is not, the vertex represented by $a$ is not adjacent to other vertices represented by elements of order nine.
\end{example}

%\begin{remark}
The groups $G$ that admit a group of automorphisms which acts transitively on the set of minimal subgroups of $G$ are either (1) abelian $p$-groups for some prime $p$ that are homocyclic or (2) nonabelian $2$-groups which are called $2$-automorphic $2$-groups.  These are $2$-groups $G$ of exponent $4$ and nilpotency class $2$ so that $G^{\prime} = \Phi (G) = Z(G) = \Omega_{1}(Z(G))$ and $\left\vert G \right\vert = \left\vert Z(G) \right\vert^{2}$ or $\left\vert G\right\vert = \left\vert Z(G) \right\vert ^{3},$ with $\Omega _{1}(Z(G)) \setminus \{1\}$ being the set of involutions of $G$ (See \cite{2-automorphic}).  It is has recently been proved in \cite{gross} that these are the Suzuki $2$-groups.  This next theorem should be compared with Corollary 4.2 of \cite{direct}.
%\end{remark}

%\begin{theorem} \label{nilpotent}
%Let $G$ be a finite nilpotent group with $A \le {\rm Aut} (G)$.

%\begin{itemize}
%\item[(1)] 
%If $G$ is a $p$-group, then the number of connected components of $\Delta (G,A)$ is the same as the number $A$-orbits on the subgroups of order $p$ of $G$. In particular, $\Delta (G,A)$ is connected if and only if $G$ is a homocyclic group, a quaternion group, or contains more than one involution and is a $2$-automorphic $2$-group. In any case, if $\Delta (G,A)$ is connected, then $\mathrm{diam}(\Delta (G,A)) \leq 2$.

%\item[(2)] 
%If $G$ is not a $p$-group, then $\Delta (G,A)$ is connected and $\mathrm{diam}(\Delta (G,A)) \leq 3$. Furthermore, if each of the Sylow subgroups of $G$ has only one $A$-orbit of minimal subgroups, then $\mathrm{diam} (\Delta (G,A))\leq 2$.
%\end{itemize}
%\end{theorem}

\begin{proof}[Proof of Theorem \ref{intro-nilpotent}]
We recall that in a nilpotent group, elements of coprime orders commute.

(1) By Theorems \ref{intro-pgroupcomponents} and \cite{2-automorphic}, we need only to prove that $\mathrm{diam}(\Delta (G,A))\leq 2$ when $\Delta (G,A)$ is connected.  We see that ${\rm diam} \Delta )\leq 2$ because if $x, y$ are any two elements of $G \setminus \{1\}$, then there exists integers $u$ and $v$ such that $x^{u}$ and $y^{v}$ are in $\Omega _{1}(Z(G))$ by \ref{connection 1}, and hence, there exists an automorphism $1 \ne a\in A$ such that $\left\langle (x^{u})^{a} \right\rangle = \left\langle y^{v} \right\rangle $ which yields (if necessary by changing $v$) without loss of generality that $x^{A} \leftrightarrow (x^{u})^{A} = \left\langle y^{v} \right\rangle^{A} \leftrightarrow y^{A}$.

(2) Consider $x, y \in G \setminus \{1\}$ so that $|x| = m$ and $|y| = n$.  If both $x$ and $y$ are $p$-elements for some prime $p$, then for any other prime number $q$ dividing the order of $G$, and any element $z$ of $G$ having order $q$, we obtain $x^{A} \leftrightarrow z^{A} \leftrightarrow y^{A}$ by \ref{coprimeelements}, and hence, $d (x^{A}, y^{A}) = 2$.  Thus, there exist two distinct prime numbers $p$ and $q$ such that $p \mid m$ and $q \mid n$. Then $x^{A} \leftrightarrow
(x^{\frac{m|}{p}})^{A} \leftrightarrow (y^{\frac{n}{q}})^{A} \leftrightarrow y^{A}$ by \ref{coprimeelements} and hence, $d (x^{A}, y^{A}) \leq 3$. This shows that $\Delta (G,A)$ is connected and $\mathrm{diam} (\Delta (G,A)) \leq 3$.

Suppose now that each of the Sylow subgroups of $G$ contains exactly one $A$-orbit of minimal subgroups and that a prime $p$ divides $(m,n)$. Then $x^{\frac{m}{p}}$ and $y^{\frac{n}{p}}$ are two elements of order $p$ that lie in the Sylow $p$-subgroup of $G$.  Hence, there exists an automorphism, $a\in A$ so that 
$$\left\langle x^{\frac{m}{p}}\right\rangle^{a} = \left\langle y^{\frac{n}{p}} \right\rangle.$$
This yields the path: 
\begin{equation*}
x^{A}=(x^{a})^{A}\leftrightarrow ((x^{\frac{m}{p}})^{a})^{A}=(y^{\frac{n}{p}%
})^{A}\leftrightarrow y^{A}
\end{equation*}%
which implies that $d(x^{A},y^{A})\leq 2$.  Combining this with the first paragraph, and using the observation that $d(x^{A},y^{A})\leq 1$ when $(m,n)=1$, we see that the proof is finished.
\end{proof}

We now present an example where $\Delta (G,A)$ has diameter $3$.

\begin{example}
Let $G = P \times R$ where $P = \left\langle a, b \right\rangle$ is a Klein four group and $R = \left\langle c , d \right\rangle$ an extraspecial group of order $27$ and of exponent $3$.  Let $A$ be the set of inner automorphisms of $G$ acting by conjugation on $G$.  By Theorem \ref{intro-nilpotent}, $\Delta(G,A)$ is connected.  We work to show that $\mathrm{diam} (\Delta(G,A)) = 3$.  For that purpose, consider the elements $x=ac$ and $y=bd$.  Clearly, the conjugacy classes of $x$ and $y$ (that is the $A$-orbits containing $x$ and $y$) are not adjacent.  So suppose that $d (x^{G}, y^{G}) = 2$.  Hence, there exists an element $1 \neq u \in G$, such that $x^{A} \leftrightarrow u^{A} \leftrightarrow y^{A}$.  This implies, without loss of generality, that $\langle x, u \rangle$ and $\langle u, y^{g} \rangle$ are both cyclic subgroups for some element $g \in G$. 

A subgroup $T$ of $G$ is cyclic if and only if it is abelian and all its Sylow subgroups are cyclic.  This means that both  $T\cap P$ and $T\cap R$ are cyclic.  So, $\langle x, u \rangle \cap P$ and $\langle x, u \rangle \cap R$ and similarly $\langle u, y^{g} \rangle \cap P$ and $\langle u,y^{g} \rangle \cap R$ are cyclic.  Let us write $u = tz$ for elements $t \in P$ and $z \in R$. Then we see that 
$$\left\langle t, a\right\rangle, \left\langle t, b^{g} \right\rangle = \left\langle t, b \right\rangle, \left\langle z, c \right\rangle, \left\langle z, d^{g} \right\rangle$$ 
are all cyclic.  Because $\left\langle t, a \right\rangle, \left\langle t,b\right\rangle$ are both cyclic implies that $t = 1$, and hence, $z \neq 1$ as $u\neq 1$.  Thus, the fact that  $\left\langle z, c \right\rangle, \left\langle z, d^{g} \right\rangle$ are both cyclic implies that $c, d^{g} \in \left\langle z \right\rangle$.  This is not possible because for any $g \in G$ we see that $d^{g} \in dR^{\prime }$, and hence  $\left\langle c, d^{g} \right\rangle = R$ is not cyclic.  Thus, $d (x^{G}, y^{G}) \neq 2$, and $\mathrm{diam} (\Delta (G,A)) = 3$.
\end{example}

We now introduce two useful conditions on centralizers, and we study the consequences of these conditions.  Compare this with Lemma 3.2 of \cite{direct}.

\begin{lemma} \label{centralizerisnotpgroup} 
Let $G$ be a finite group with $A \le {\rm Aut} (G)$ and assume $Z(G)$ is a non-trivial $p$-group for some prime $p$.  If $C_{G}(x)$ is not a $p$-group for any element $x$ of order $p$, then $\Delta (G,A)$ is connected with $\mathrm{ diam} (\Delta (G,A)) \leq 6$.
\end{lemma}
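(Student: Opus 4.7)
The plan is to fix a convenient basepoint and show every other vertex lies within distance $3$ of it; the triangle inequality will then give the diameter bound of $6$. Since $Z(G)$ is a nontrivial $p$-group, pick an element $z \in Z(G)$ of order $p$ and use $z^A$ as the basepoint. Given an arbitrary nonidentity $u \in G$, I would argue that $d(u^A, z^A) \le 3$ by splitting into two cases according to whether $p$ divides $|u|$.

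If $p \nmid |u|$, then $u$ commutes with the central element $z$ and $(|u|,|z|)=1$, so Lemma~\ref{coprimeelements} directly yields $u^A \leftrightarrow z^A$ (or they coincide), so $d(u^A,z^A) \le 1$. If $p \mid |u|$, set $u' = u^{|u|/p}$, an element of order $p$; since $\langle u, u'\rangle = \langle u\rangle$ is cyclic, either $u^A = (u')^A$ or $u^A \leftrightarrow (u')^A$. By hypothesis $C_G(u')$ is not a $p$-group, so it contains an element of order divisible by some prime $q \neq p$; raising to a suitable power, we obtain $g \in C_G(u')$ with $|g| = q$. Applying Lemma~\ref{coprimeelements} to the commuting pair $u',g$ of coprime orders gives $(u')^A \leftrightarrow g^A$, and applying it once more to the commuting pair $g,z$ (of coprime orders $q$ and $p$) gives $g^A \leftrightarrow z^A$. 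Chaining these edges produces a path of length at most $3$ from $u^A$ to $z^A$.

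Combining the two cases, for any nonidentity $u,v \in G$ the triangle inequality gives
\[
d(u^A, v^A) \le d(u^A, z^A) + d(z^A, v^A) \le 3 + 3 = 6,
\]
so $\Delta(G,A)$ is connected with $\mathrm{diam}(\Delta(G,A)) \le 6$.

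The whole argument is essentially a choreography of Lemma~\ref{coprimeelements}, and the only mild subtlety is that whenever two consecutive orbits in the chain happen to coincide the path simply contracts rather than breaks, so no separate case analysis is required for coincidences. The substantive content is concentrated in a single step: the hypothesis that $C_G(u')$ is not a $p$-group is precisely what allows us to exit the ``$p$-world'' after reducing $u$ to an element of order $p$ and then cross to the central element $z$ via an intermediate element of prime order coprime to $p$.
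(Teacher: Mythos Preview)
Your proof is correct and follows essentially the same strategy as the paper: fix a central $p$-element $z$ as a basepoint, show every vertex is within distance $3$ of $z^A$ by reducing to an element of prime order and invoking Lemma~\ref{coprimeelements}, then apply the triangle inequality. The only cosmetic difference is that the paper splits into the cases ``$|x|$ has a prime divisor $\ne p$'' versus ``$x$ is a $p$-element,'' whereas you split on whether $p$ divides $|u|$; both partitions lead to the same bound $d(u^A,z^A)\le 3$.
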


\begin{proof}
Consider an element $x \in G\setminus \{1\}$ and $z$ a non-trivial element of $Z(G)$.  For a suitable integer $t$, we have  $|x^{t}| = q$, where $q$ is a prime.  If $q \neq p$, then $x^{A} \leftrightarrow (x^{t})^{A} \leftrightarrow z^{A}$. 

Assume $q = p$.   That is, assume $x$ is a $p$-element.  Since $C_{G} (x^{t})$ is not a $p$-group, there exists an element  $y \in C_{G} (x^{t})$ with $|y| = r$, where $r$ is a prime different from $p$.  Now, $x^{A}\leftrightarrow  (x^{t})^{A}\leftrightarrow y^{A} \leftrightarrow z^{A}$.  Hence, $\mathrm {d} (x^{A},z^{A}) \leq 3$ for an arbitrary $x \in G\setminus \{1\}$.  Finally, if $x, y \in G \setminus \{1\}$, then a path between $x^{A}$ and $y^{A}$ of length at most six can be built by passing through $z$. The result follows.
\end{proof}

\section{Universal vertices}\label{section 3}

Next we want to look at the groups $G$ admitting a group $A$ acting by automorphisms on $G$ in such a way that the graph $\Delta (G,A)$ becomes naturally connected because it has a {\it universal vertex} (in the literature on groups with graphs this is often called a dominating vertex, but in the graph theory literature universal seems preferred), that is, a vertex which is adjacent to every other vertex.

%\begin{definition}
Let G be a group. For any $x\in G$ we denote the subset 
\begin{equation*}
{\rm Cyc}_{G} (x) = \{y \in G: \left\langle x,y \right\rangle \text{ is cyclic}\}.
\end{equation*}%
The intersection of ${\rm Cyc}_G (x)$ over all elements $x$ of $G$ is denoted by $K (G)$.  Thus, 
\begin{equation*}
K (G) = \{g \in G : \left\langle x,g \right\rangle \text{ is cyclic for all ~} x \in G\}.
\end{equation*}%

Observe that $K(G)$ is the set of universal vertices for the enhanced power graph of $G$ along with the identity.  This set was apparently first studied in \cite{cycels} (in another context) and in Theorem 1 of \cite{cycels}, $K(G)$ is proved to be  a characteristic subgroup of $G$ which is cyclic and is contained in $Z(G)$.  We next begin to consider properties of universal vertices in $\Delta (G,A)$.
%\end{definition}

\begin{theorem}  \label{dominating vertex}
Let $G$ be a group with $A \le {\rm Aut} (G)$ and consider $x \in G$.  Then the following are equivalent:
\begin{enumerate}
\item The vertex $x^{A}$ of $\Delta (G,A)$ is a universal vertex
\item \[G = \bigcup\limits_{a\in A} {\rm Cyc}_{G} (x)^{a}.\] 
\end{enumerate}
When this occurs, the following are true:
\begin{enumerate}
\item[(i)] All subgroups of $G$ of order dividing the order of $x$ are $A$-conjugate to a subgroup $\left\langle x \right\rangle$ and $G = \bigcup_{a \in A} C_G (x)^a$. %intersects nontrivially every $A$-class of elements of $G$. %whose orders are coprime to $o(x)$.
\item[(ii)] Assume in addition that the conjugacy class $x^{G}$ is $A$-invariant (which is for example the case if $A \leq Inn(G)$).  If $x^{A}$ is a universal vertex of $\Delta (G,A)$, then $x \in K (G)$.
\item[(iii)] If $x \in K (G)$, then $x^{A}$ is a universal vertex of $\Delta (G,A)$ and $\left\langle x \right\rangle$ is the only subgroup of $G$ of order equal to the order of $x$.
\end{enumerate}
\end{theorem}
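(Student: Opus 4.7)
The plan is to establish $(1)\Leftrightarrow(2)$ first and then extract the three consequences from it. The key ingredient is the equivariance identity $\mathrm{Cyc}_G(x^a)=\mathrm{Cyc}_G(x)^a$ for $a\in A$, which holds because an automorphism preserves cyclicity of two-generator subgroups. With this identity, adjacency of $x^A$ and $y^A$ unpacks as $y^b\in\mathrm{Cyc}_G(x)^a$ for some $a,b\in A$, equivalently $y$ lies in the $A$-invariant set $\bigcup_{a\in A}\mathrm{Cyc}_G(x)^a$. Since $1$ and every element of $x^A$ automatically lie in this union, universality of $x^A$ is equivalent to the union equaling $G$.

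Parts (i) and (iii) drop out of this quickly. For (i), the containment $\mathrm{Cyc}_G(x)\subseteq C_G(x)$ immediately yields $G=\bigcup_a C_G(x)^a$. For the subgroup claim I would take a cyclic subgroup $Y=\langle y\rangle$ whose order divides $|x|$; by (2), $y^{a^{-1}}\in\mathrm{Cyc}_G(x)$ for some $a\in A$, so $\langle x,y^{a^{-1}}\rangle$ is cyclic of order $\mathrm{lcm}(|x|,|y|)=|x|$, hence equals $\langle x\rangle$, giving $Y^{a^{-1}}\le\langle x\rangle$. For (iii), $x\in K(G)$ gives $\mathrm{Cyc}_G(x)=G$ so (2) is automatic; and if $Y\le G$ has order $|x|$, then each $y\in Y$ satisfies $\langle x,y\rangle$ cyclic of order $\mathrm{lcm}(|x|,|y|)=|x|$, forcing $y\in\langle x\rangle$, and hence $Y=\langle x\rangle$.

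The main obstacle is (ii). The $A$-invariance of $x^G$ provides $k_a\in G$ with $x^a=x^{k_a}$ for every $a\in A$, so
\[
\mathrm{Cyc}_G(x)^a=\mathrm{Cyc}_G(x^a)=\mathrm{Cyc}_G(x)^{k_a},
\]
converting (2) into $G=\bigcup_{a\in A}\mathrm{Cyc}_G(x)^{k_a}\subseteq\bigcup_{k\in G}C_G(x)^k$. I would then invoke the classical fact that a finite group is never the union of proper conjugates of a single subgroup to conclude $C_G(x)=G$, that is, $x\in Z(G)$. With $x$ central, $\mathrm{Cyc}_G(x)$ is itself conjugation-invariant, so (2) collapses to $\mathrm{Cyc}_G(x)=G$, giving $x\in K(G)$. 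The one subtlety is that $\mathrm{Cyc}_G(x)$ need not itself be a subgroup, which is precisely why I pass to the overgroup $C_G(x)$ before applying the conjugate-cover lemma.
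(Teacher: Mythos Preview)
Your argument is correct and closely parallels the paper's: the equivalence $(1)\Leftrightarrow(2)$ and consequences (i) and (iii) are handled essentially as in the paper (for (iii) the paper instead cites that $K(G)$ is characteristic and cyclic, but your direct argument works equally well).

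The only substantive difference is in (ii). The paper does not pass to $C_G(x)$; it runs the counting argument directly on the \emph{set} $X=\mathrm{Cyc}_G(x)\setminus\{1\}$. Since $\mathrm{Cyc}_G(x)\subseteq C_G(x)\subseteq N_G(X)$ (the setwise normaliser), one has $|X|\le |N_G(X)|-1$, and then $G=\bigcup_{g\in G}\mathrm{Cyc}_G(x)^g$ gives $|G|\le 1+[G:N_G(X)]\,|X|$, forcing $N_G(X)=G$ and hence $\mathrm{Cyc}_G(x)=G$ in a single step. Your route---enlarge to $C_G(x)$, apply the standard ``a finite group is not the union of conjugates of a proper subgroup'' lemma to get $x\in Z(G)$, then use centrality to collapse all the $\mathrm{Cyc}_G(x)^{k_a}$---is a correct two-step alternative. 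It trades an ad hoc set-counting for a well-known subgroup lemma, at the price of one extra step; the paper's version is more direct, yours is more modular.
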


\begin{proof}
If $x^{A}$ is a universal vertex, then for any element $1 \neq y$ there exists an automorphism $a\in A$ such that $\left\langle x, y^{a} \right\rangle$ is cyclic, which implies that $y \in \bigcup\limits_{a \in A} {\rm Cyc}_{G} (x)^{a}$. We conclude $G = \bigcup\limits_{a \in A} {\rm Cyc}_{G} (x)^{a}$.  Suppose $y \in G$ has order dividing the order of $x$.  Since $x^A$ and $y^A$ are adjacent, there is an $a \in A$ so that $\langle x, y^a \rangle$ is cyclic.  Now, since $\langle x, y^a \rangle$ has a unique subgroup of order $o(x)$, we conclude that $\langle x \rangle \ge \langle y^a \rangle$.  This implies that every subgroup of order dividing $o(x)$ is $A$-conjugate to a subgroup of $\langle x \rangle$.  Suppose $g \in G$.  We know that $x^A$ and $g^A$ are adjacent, so there is some automorphism $a \in A$, so that $\langle x^a, g \rangle$ is cyclic.  This implies that $u \in C_G (x^a) = C_G (x)^a$ and thus, $G \le \bigcup_{a \in A} C_G (x)^a$.  We conclude that $G = \bigcup_{a \in A} C_G (x)^a$.

Conversely, suppose $G = \bigcup\limits_{a \in A} {\rm Cyc}_{G} (x)^{a}$.  Then for any $g \in G$, there exists an automorphism $a \in A$ such that $\left\langle g, x^{a} \right\rangle$ is cyclic.  That is, $x^{A}$ is adjacent to $g^{A}$, and therefore, $x^{A}$ is a universal vertex of $\Delta (G,A)$.  %Next, suppose all subgroups of $G$ with order equal to $o(x)$ are $A$-conjugate to $\langle x \rangle$ and $C_G (x)$ intersects nontrivially all $A$-classes of elements of $G$ whose order is coprime to $o(x)$.  

Suppose $A$ leaves $x^{G}$ invariant.  That is, for any automorphism $a\in A$ there exists an element $g \in G$ such that ${x^{a} = x^{g}}$.  Since $x^{G}$ is a universal vertex of $\Delta (G,A)$, we have $G = \bigcup\limits_{a \in A} {\rm Cyc}_{G} (x)^{a} = \bigcup\limits_{g \in G} {\rm Cyc}_{G} (x)^{g}$.  Thus, writing $X = {\rm Cyc}_{G} (x) - \{ 1 \}$, we obtain  $\bigcup\limits_{g \in G} {\rm Cyc}_{G} (x)^{g} = \{1\} \cup \bigcup\limits_{g \in G} X^{g}$ and hence, $\left\vert G \right\vert \leq 1+ [G:N_{G}(X)] \left\vert X \right\vert$.  This is not possible since $X \subset N_{G} (X) - \{1\}$ if ${\rm Cyc}_{G} (x)$ is a proper subset of $G$.  So we get ${\rm Cyc}_{G} (x) = G$, and hence, $x \in K (G)$. 

Conversely, if $x$ is a nontrivial element of $K (G)$, then clearly (as $x$ is a universal vertex of $\Delta (G,1)$) $x^{A}$ is adjacent to every other vertex in $\Delta (G,A)$.  But $K (G)$ is a characteristic subgroup of $G$ which is cyclic and is contained in $Z (G)$.  Therefore, $\left\langle x \right\rangle = \left\langle x \right\rangle^{a} = \left\langle y \right\rangle$ for any automorphism $a\in A$ and for any element $y\in G$ whose order is equal to the order of $x$.
\end{proof}

%\begin{remark}
We now present an example of a group $G$ with an automorphism group $A$ so that $\Delta (G,A)$ has universal vertices where $K (G) = 1$.  Let $G$ be an elementary abelian group of order $p^{n}$ with integer $n > 1$.  Then we can consider $G$ to be the additive group of the field with $p^{n}$ elements.  Let $A$ be the multiplicative group of this field.  Then $A$ acts Frobeniusly on $G$ and is transitive on $G \setminus \{1\}$.  For any element $1 \neq x \in G$, we have ${\rm Cyc}_{G} (x) = \left\langle x \right\rangle$ and thus, $K (G) = 1$.  We deduce that $\Delta (G,A)$ has only one vertex, and in particular, is vacuously a complete graph.
%\end{remark}

We next show that if all of the vertices corresponding to $A$-classes of $p$-elements are adjacent to all of vertices of $A$-classes corresponding to $q$-elements in $\Delta (G,A)$ when $p$ and $q$ are distinct primes, then $G$ has to be nilpotent.

\begin{theorem}\label{p-element and q-element}
Let $G$ be a group with $A \le {\rm Aut} (G)$.  Suppose that for any two distinct primes $p$ and $q$ and for all elements $x, y \in G \setminus \{ 1 \}$ where $x$ is a $p$-element and $y$ is a $q$-element, we have $x^A \lra y^A$ in $\Delta (G,A)$. Then $G$ is nilpotent.
\end{theorem}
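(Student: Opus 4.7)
The first step is to decode the adjacency hypothesis into a cleaner commutation statement. By definition, $x^A \lra y^A$ in $\Delta(G,A)$ means there exist $a_1, a_2 \in A$ with $\langle x^{a_1}, y^{a_2}\rangle$ cyclic. Since $|x^{a_1}|$ is a $p$-power and $|y^{a_2}|$ is a $q$-power with $\gcd(p,q)=1$, cyclicity is equivalent to $x^{a_1}$ and $y^{a_2}$ commuting. Applying the automorphism $a_1^{-1}$ to the relation, the hypothesis can be restated uniformly as: for every $p$-element $x \neq 1$ and every $q$-element $y \neq 1$ (with $p \neq q$ primes), the $A$-orbit $y^A$ meets $C_G(x)$; equivalently, $y$ itself commutes with some $A$-conjugate of $x$.

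The plan is to establish nilpotency by mimicking the classical Frobenius-style argument. Recall that in the case $A = 1$, the hypothesis becomes ``every $p$-element commutes with every $q$-element,'' and then the proof runs as follows: for any nontrivial $p$-subgroup $H$ and any $g \in N_G(H)$, decompose $g = g_p g_{p'}$; since $g_{p'}$ is a $p'$-element, it commutes with every element of $H$, hence $g_{p'} \in C_G(H)$. Thus $N_G(H)/C_G(H)$ is a $p$-group for every $p$-subgroup $H$, and Frobenius's normal $p$-complement theorem yields a normal $p$-complement for every prime $p$, forcing $G$ to be nilpotent. I would aim to adapt this exact scheme: fix a prime $p \in \pi(G)$, a $p$-subgroup $H \le G$, and a $p'$-element $y \in N_G(H)$; the goal is to show $y \in C_G(H)$, from which Frobenius's criterion applies.

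The core step is therefore to upgrade the $A$-weakened hypothesis (``some $A$-conjugate of $y$ commutes with $x$, for each $x \in H$ of prime order'') to the ordinary statement ($y$ itself commutes with $x$). My approach would be to exploit that $y$ normalizes $H$, so $y$ acts on $H$ as an automorphism, while the hypothesis supplies, for each $x \in H$, an $a_x \in A$ with $y^{a_x} \in C_G(x)$. Comparing the conjugation actions of $y$ and $y^{a_x}$ on $\langle x\rangle$, and letting $x$ vary over a generating set of $H$ of prime order elements, one should be able to combine the $A$-conjugates into a single constraint forcing $y$ to centralize all of $H$ outright. A complementary angle would be to first show that, for a Sylow $p$-subgroup $P$, the set $\bigcup_{a \in A} C_G(P)^a$ covers every $p'$-element of $G$, and then use that the $A$-action on Sylow $p$-subgroups is controlled enough to collapse the union back into $C_G(P)$.

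The main obstacle I anticipate is precisely this passage from pointwise-$A$-conjugate commutation to genuine commutation; this gap is trivial when $A = 1$ but must be bridged using the global structure of how $A$ permutes the Sylow subgroups and centralizers of $G$. In particular, the argument will likely require carefully tracking $A$-orbits on the set of $p$-subgroups, and this combinatorial bookkeeping is where I expect the technical heart of the proof to reside.
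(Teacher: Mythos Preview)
Your opening reduction is correct and is in fact the entire content of the paper's argument: since $x$ and $y$ have coprime prime-power orders, $\langle x',y'\rangle$ is cyclic if and only if $[x',y']=1$, so the hypothesis on $\Delta(G,A)$ is identical to the corresponding hypothesis on the commuting $A$-orbit graph. The paper stops there and simply cites \cite{I.G-G.E}, where the commuting-graph version is proved; it does not give an independent argument.

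Everything after your first paragraph, however, is an attempt to \emph{reprove} the cited result, and there the plan has a genuine gap that you yourself flag but do not close. The Frobenius-criterion route needs, for a $q$-element $y\in N_G(H)$ and each $x\in H$, that $y$ centralizes $x$. The hypothesis only supplies $a_x\in A$ with $[x,y^{a_x}]=1$, which after applying $a_x^{-1}$ says $y$ centralizes $x^{a_x^{-1}}$ --- some $p$-element in the $A$-orbit of $x$, typically \emph{outside} $H$. Your suggestion to ``compare the conjugation actions of $y$ and $y^{a_x}$ on $\langle x\rangle$'' does not yield anything: $y^{a_x}$ acts trivially on $\langle x\rangle$, but $a_x$ need not normalize $\langle x\rangle$ (or $H$), so there is no relation between the actions of $y$ and $y^{a_x}$ on that particular subgroup. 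Likewise, the covering statement $\bigcup_{a\in A}C_G(P)^a\supseteq\{p'\text{-elements}\}$ does not collapse to $C_G(P)\supseteq\{p'\text{-elements}\}$: a union of $A$-translates of a proper subset can certainly cover, and nothing here forces $A$ to act trivially on the Sylow $p$-subgroups.

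In short, the Frobenius normal $p$-complement strategy, which is immediate when $A=1$, does not survive the passage to general $A$ by the local manipulations you describe. The proof in \cite{I.G-G.E} proceeds differently (working inside the semidirect product $G\rtimes A$ and arguing by induction/minimal counterexample rather than trying to verify Frobenius's criterion directly), and that global structure is what absorbs the ambiguity in the choice of $a_x$ that your local argument cannot eliminate.
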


\begin{proof}
In \cite{I.G-G.E}, a more general form of this theorem is proved.
\end{proof}

We can now show that if $\Delta (G,A)$ is a complete graph, then $G$ must be nilpotent.

\begin{corollary}\label{D(G,A) is a complete graph}
Let $G$ be a group with $A \le {\rm Aut} (G)$.  If $\Delta (G,A)$ is a complete graph, then $G$ is a nilpotent group.
\end{corollary}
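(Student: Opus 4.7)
The plan is to invoke Theorem \ref{p-element and q-element} directly, with essentially no additional work. The hypothesis of that theorem asks that for any two distinct primes $p$ and $q$, every nonidentity $p$-element $x$ and nonidentity $q$-element $y$ satisfy $x^{A} \lra y^{A}$ in $\Delta(G,A)$. If $\Delta(G,A)$ is a complete graph, then by definition \emph{every} pair of distinct vertices is joined by an edge, so the hypothesis is automatic \emph{provided} that $x^{A}$ and $y^{A}$ are distinct vertices.

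To see this, I would observe that automorphisms preserve element order, so every element of $x^{A}$ has order $o(x)$, which is a power of $p$, while every element of $y^{A}$ has order $o(y)$, a power of $q$. Since $p \neq q$, these orders cannot coincide, so $x^{A} \neq y^{A}$, and $x^{A}$, $y^{A}$ are indeed distinct vertices of $\Delta(G,A)$. Completeness then furnishes the edge $x^{A} \lra y^{A}$, and Theorem \ref{p-element and q-element} yields the nilpotency of $G$.

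There is no real obstacle here: the statement is a direct specialization of Theorem \ref{p-element and q-element}, and the only thing worth spelling out in the write-up is the order-based argument that vertices of different prime-power order are distinct, so that completeness genuinely feeds the required adjacency hypothesis.
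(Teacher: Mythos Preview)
Your proposal is correct and matches the paper's approach: the paper states this as an immediate corollary of Theorem \ref{p-element and q-element} with no additional argument. Your explicit check that $x^{A}\neq y^{A}$ via preservation of element order is a harmless elaboration of what the paper leaves implicit.
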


We next see when we restrict to the case when $A$ is a subgroup of the inner automorphism group of $G$, we deduce that $\Delta (G,A)$ is complete graph if and only if $G$ is cyclic.  This generalizes the case of $\Delta (G,1)$ which is the enhanced power graph.   %Note that this Theorem \ref{intro-complete for Inn} from the Introduction.  

%\begin{corollary}\label{complete for Inn}
%Let $G$ be a group.  If $A \leq {\rm Inn} (G)$, then $\Delta (G,A)$ is a complete graph if and only if $G$ is cyclic group.
%\end{corollary}

\begin{proof}[Proof of Theorem \ref{intro-complete for Inn}]
By	Corollary \ref{D(G,A) is a complete graph}, we know that $\Delta (G,A)$ complete implies $G$ is nilpotent.  Conversely, suppose $A \le G$ where $G$ is a nonabelian nilpotent group.  Since $A \le G$, the class $z^A$ is a universal vertex if and only if $z \in K (G)$.  Therefore, $\Delta (G,A)$ is not complete.  Thus, we may assume that $G$ is abelian.  We already know that if $G$ is not cyclic, then $\Delta (G,A)$ is not connected, let alone complete.  
\end{proof}
    
More precisely, there exist non-cyclic nilpotent groups $G$ (for example $G = D_8$, the dihedral group of order $8$) such that $\Delta (G, A)$ is not complete for any subgroup $A\leq {\rm Aut} (G)$. For example, $\Delta (D_8, {\rm Aut} (D_8))$ has $3$ vertices corresponding to: the orbit of the involution in the center of $D_8$, the orbit of the elements of order $4$, and the orbit of the involutions outside the center of $D_8$, the last two of these vertices are not adjacent.
%\end{proof}

As another corollary, we obtain the following result, which is a theorem of A. Mahmoudifar and A. Babai (see \cite{A.M-A.B}).  We note that this was also proved indepedently for the enhanced power graph by the second author with others in Theorem 3.3 of \cite{zgps}.  We note that for the enhanced power graph, additional information was found reqarding the structure of groups having a universal vertex in \cite{univ}.  We wonder whether a similar result can be obtained in relation to this graph.   %Observe that the following is Theorem \ref{intro-universal p} from the Introduction.

%\begin{corollary}\label{universal p}
%If $G$ is a group and $A \leq {\rm Inn} (G)$, then $\Delta (G,A)$ has a universal vertex if and only if there exists a prime $p$ dividing $|Z(G)|$ and $G$ has a cyclic or generalized quaternion Sylow $p$-subgroup.
%\end{corollary}

\begin{proof}[Proof of Theorem \ref{intro-universal p}]
Assume $\Delta (G,A)$ has a universal vertex.  By Theorem \ref{dominating vertex}, $G$ has a unique subgroup $\left\langle x \right\rangle$ of prime order, say $p$, this element $x$ lies in $K (G)$, so $p$ divides $|K (G)|$ which divides $|Z(G)|$.  Also, if $P$ is a Sylow $p$-subgroup of $G$, then $P$ is cyclic or generalized quaternion.

Conversely, suppose that $p$ is a prime so that a Sylow $p$-subgroup $P$ of $G$ is cyclic or generalized quaternion and $p$ divides $|Z(G)|$.  Let $\left\langle y \right\rangle$ be the unique subgroup of $P$ of order $p$.  It is conjugate, and hence, equal to the subgroup of order $p$ in $Z(G)$.  In particular, $G$ has only one subgroup of order $p$.   Consider an $A$-class $g^{A} \in V(\Delta (G,A)) \setminus \{ y^{A} \}$.  If $(|y|, |g|) = 1$, then $y^{A}$ and $g^{A}$ are adjacent in $\Delta (G,A)$, since $y$ and $g$ are commuting elements with coprime orders.  If $p$ divides $|g|$, then the uniqueness of the subgroup of order $p$ forces $\left\langle y \right\rangle \le \left\langle g \right\rangle$.  Again, $y^{A}$ is adjacent to $g^{A}$.  We conclude that $y$ is a universal vertex.
\end{proof}

We also have the following corollary.

\begin{corollary}
Suppose $G$ is a group with $A \le {\rm Aut} (G)$.  Let  $x^{A}$ be a universal vertex of $\Delta(G,A)$. If $x$ is of order $p$ and $G$ is $p$-solvable with $O_{p'}(G) = 1$, then $\Omega_{1} (Z(O_{p}(G))) = \Omega_{1} ((O_{p} (G))$ and $O_{p} (G)$ is either abelian or is a $2$-automorphic $2$-group.
\end{corollary}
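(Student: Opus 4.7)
The plan is to reduce the statement to the classification, recalled in the paragraph preceding Theorem \ref{intro-nilpotent}, of $p$-groups that admit an automorphism group transitive on their set of minimal subgroups. Write $P = O_p(G)$. Since $G$ is $p$-solvable with $O_{p'}(G) = 1$, the Fitting subgroup of $G$ equals $P$, so in particular $P \neq 1$.

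The main step is to show that $A$, acting on $P$, is transitive on the subgroups of $P$ of order $p$. Because $x^A$ is a universal vertex and $o(x) = p$, Theorem \ref{dominating vertex}(i) gives that every subgroup of $G$ of order $p$ is $A$-conjugate to $\langle x \rangle$. Fix any $z \in P$ of order $p$; then $\langle z \rangle = \langle x \rangle^a$ for some $a \in A$. Since $P = O_p(G)$ is characteristic in $G$ and $A \leq {\rm Aut}(G)$, the subgroup $P$ is $A$-invariant, and so $\langle x \rangle = \langle z \rangle^{a^{-1}} \leq P$. Running the argument in the opposite direction, every order-$p$ subgroup of $G$ already lies inside $P$, and the induced $A$-action on $P$ is transitive on these minimal subgroups.

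Next I would appeal to the result recalled in the paper that a $p$-group admitting an automorphism group transitive on its set of minimal subgroups is either a homocyclic abelian $p$-group or a $2$-automorphic $2$-group. Applied to $P$ with the induced $A$-action, this yields the asserted alternative that $P$ is abelian or a $2$-automorphic $2$-group. The identity $\Omega_1(Z(P)) = \Omega_1(P)$ is then automatic: it is trivial when $P$ is abelian, and in the $2$-automorphic case it is precisely the property, recalled in the same paragraph, that $\Omega_1(Z(P)) \setminus \{1\}$ equals the full set of involutions of $P$.

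The main obstacle is really the transitivity step: one must use the $A$-invariance of $O_p(G)$ together with Theorem \ref{dominating vertex}(i) in both directions, first to force $\langle x \rangle$ into $P$ and then to confine every order-$p$ subgroup of $G$ to $P$. Once that is secured, both assertions of the corollary follow directly from the classification and from the definition of a $2$-automorphic $2$-group, with no further computation needed.
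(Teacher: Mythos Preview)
Your argument is correct and uses the same two ingredients as the paper's proof: Theorem \ref{dominating vertex}(i) and the classification (recalled before Theorem \ref{intro-nilpotent}) of $p$-groups admitting an automorphism group transitive on their minimal subgroups. The only real difference is the order in which the two conclusions are extracted. The paper works with $Z(O_p(G))$ rather than $O_p(G)$: since $Z(O_p(G))$ is characteristic in $G$ and contains some subgroup of order $p$, the single $A$-orbit of such subgroups already lies in $Z(O_p(G))$, giving $\Omega_1(Z(O_p(G)))=\Omega_1(O_p(G))$ directly, independent of the classification; the abelian/$2$-automorphic dichotomy then comes afterward. You instead show the orbit lands in $O_p(G)$, apply the classification first, and read off the $\Omega_1$-identity from the structure of the two possible outcomes. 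Both routes are valid; the paper's ordering has the mild advantage that the $\Omega_1$-equality does not rely on the classification, while yours avoids the extra step through the center.
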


\begin{proof}
By Theorem \ref{dominating vertex}, we kow that $\langle x \rangle$ is $A$-conjugate to all the subgroups of order $p$ in $G$.  Since $O_{p'} (G) = 1$, we have $O_p (G) > 1$ and thus, $Z(O_p (G)) >1$.  By Cauchy's theorem, $Z(O_p (G))$ is going to contain a subgroup of order $p$.  Since $Z(O_p (G))$ is characteristic in $G$, we conclude that $Z (O_p (G))$ contains all of the subgroups of order $p$ in $G$.  Hence, $\Omega_{1} (Z(O_{p}(G))) = \Omega_{1} ((O_{p} (G))$.  As we mentioned before, having a group of automorphisms that acts transitively on the subgroups of order $p$ implies that a Sylow $p$-subgroup of $G$ and hence $O_p (G)$ is abelian or is a $2$-automorphic $2$-group. 
\end{proof}

The question arises: is it possible to characterize the universal vertices of $\Delta (G,A)$ when $A$ is not contained in ${\rm Inn} (G)$?  In particular, we have shown that all subgroups of $G$ whose order divides $o(x)$ must be $A$-conjugate to $\langle x \rangle$ and $G = \bigcup_{a \in A} C_G (x)^a$.  Is this enough to characterize the universal vertices or are additional conditions needed?  Do the universal vertices of $\Delta (G,A)$ form a subgroup of $G$?  We note that when removing the $p$-solvable hypothesis, one will want to consider Theorem A of \cite{KNST}, where they classify the groups (in particular, the non-solvable groups) where the $p$-elements are all conjugate.  To determine if they yield a complete vertex in our graph, one needs to see if their centralizers intersects nontrivially every $A$-class of $G$. 

%Question: Can we characterize the pairs $(G,A)$ where $A$ is not contained in ${\rm Inn} (G)$ so that $\Delta (G,A)$ is a complete graph?

We now characterize the pairs $(G,A)$ where $\Delta (G,A)$ is a complete graph when $A$ is not contained in ${\rm Inn} (G)$. 

%\begin{theorem} \label{complete for general A}
%Let $G$ be a group and let $A \le {\rm Aut} (G)$.   Then $\Delta (G,A)$ is a complete graph if and only if $G$ is nilpotent and every element $x$ of $G$ having $p$-power order is $A$-conjugate to all elements of order $o(x)$ in $G$ for every prime $p$ dividing $|G|$, i.e. $G$ is either abelian or the direct product of an abelian group with a $2$-automorphic $2$-group.
%\end{theorem}

\begin{proof}[Proof of Theorem \ref{intro-complete for general A}]
Suppose that $G$ is nilpotent and every element $x$ of $G$ having $p$-power order is $A$-conjugate to all elements of order $o(x)$ in $G$ for every prime $p$ dividing $|G|$.   Let $p_1, \dots, p_n$ be the set of primes that divide $|G|$.  Let $g$ and $h$ be two nonidentity elements of $G$.  We can write $g$ and $h$ in terms of their $p_i$-power components.  Thus, we have $g = \prod_{i=1}^n g_i$ and $h = \prod_{i=1}^n h_i$ so that $g_i$ and $h_i$ have $p_i$-power order.  For each $i$, let $P_i$ be the Sylow $p_i$-subgroup and let $A_i$ be the restriction of $A$ to $P_i$.   For each $i$, we can find $a_i \in A_i$ so that $g_i \in \langle (h_i)^{a_i} \rangle$ when $o (g_i) \le o (h_i)$ and $(h_i)^{a_i} \in \langle g_i \rangle$ when $o(h_i) < o (g_i)$.  Let $a = \prod a_i$ and it is not difficult to see that $\langle g,h^a \rangle = \langle g_1,(h_1)^{a_1} \rangle \times \dots \langle g_n,(h_n)^{a_n} \rangle$.  Since each $\langle g_i,(h_i)^{a_i} \rangle$ is cyclic, it follows that $\langle g,h^a \rangle$ is cyclic.  Thus, $g^A$ and $h^A$ are adjacent in $\Delta (G,A)$, and we conclude that $\Delta (G,A)$ is a complete graph.

Conversely, suppose $\Delta (G,A)$ is a complete graph.  By Corollary \ref{D(G,A) is a complete graph}, $G$ is nilpotent.  By Theorem \ref{dominating vertex} (i), we know that every element of prime power order is $A$-conjugate to all elements of its order.  

The last statement follows from the fact that has been mentioned a couple of times that a group with all the subgroups of order that are $A$-conjugate is either abelian or a $2$-automorphic $2$-group.
\end{proof}

%It seems natural to ask about characterizing univeral vertices in $\Delta (G,A)$ when $A$ is not contained in ${\rm Inn} (G)$, but we have not been able to do that at this point.

\section{\protect\bigskip The structure of groups whose $\Delta(G,A)$ has no edges} \label {section 4}

In this section we want to characterize the finite groups $G$ admitting a group $A$ acting by automorphisms on $G$ in such a way that the graph $\Delta(G,A)$ has no edges.

\begin{lemma} \label{isolated vert}
Let $G$ be a group with $A \le {\rm Aut} (G)$.   Then $x^{A}$ is an isolated vertex of $\Delta (G,A)$ if and only if $x$ is an element of order $p$ for some prime $p$, $C_{G}(x)$ is a $p$-group of exponent $p$, and $N_{A} (\left\langle x \right\rangle)$ acts transitively on $\left\langle x \right\rangle \setminus \{ 1 \}$.
\end{lemma}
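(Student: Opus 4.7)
The plan is to establish the equivalence in two directions; I expect the backward direction to be the more routine one.

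\emph{Backward direction.} I would argue by contradiction. Assume the three conditions. If $x^{A}$ were adjacent to some $y^{A} \neq x^{A}$, then $\langle x^{a}, y^{b}\rangle$ would be cyclic for some $a, b \in A$; set $u = x^{a}$ and $v = y^{b}$. Since $u$ has order $p$ and commutes with $v$, the element $v^{a^{-1}}$ lies in $C_{G}(x)$, and the exponent-$p$ hypothesis forces $|v| = p$. A cyclic group cannot contain two distinct subgroups of order $p$, so $\langle u\rangle = \langle v\rangle$; thus $v^{a^{-1}} \in \langle x\rangle \setminus \{1\}$, and transitivity of $N_{A}(\langle x\rangle)$ on $\langle x\rangle \setminus \{1\}$ puts $v^{a^{-1}}$ in $x^{A}$. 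That yields $y^{A} = x^{A}$, the desired contradiction.

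\emph{Forward direction.} Assume $x^{A}$ is isolated. My central tool would be the observation that whenever $\langle x, y\rangle$ is cyclic with $y \neq 1$, necessarily $y^{A} = x^{A}$ --- otherwise there is an edge. In other words, $\mathrm{Cyc}_{G}(x) \setminus \{1\} \subseteq x^{A}$, and every element of $\mathrm{Cyc}_{G}(x) \setminus \{1\}$ has order $|x|$. From this I would quickly extract: (i) $|x|$ is prime (else a proper power $x^{n/d}$ gives a smaller-order element of $\mathrm{Cyc}_{G}(x)$); (ii) the transitivity of $N_{A}(\langle x\rangle)$ on $\langle x\rangle \setminus \{1\}$, since all nontrivial powers of $x$ lie in $x^{A}$ via conjugators that normalize $\langle x\rangle$; and (iii) $C_{G}(x)$ is a $p$-group, since the $q$-part of any element of $C_{G}(x)$ with $q \neq p$ dividing its order would combine with $x$ to produce a cyclic subgroup of order $pq$ containing a wrong-order element of $\mathrm{Cyc}_{G}(x)$.

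The hardest part of the proof will be the exponent-$p$ conclusion for $C_{G}(x)$. The easy subcase is $y \in C_{G}(x)$ of order $p^{2}$ for which the unique order-$p$ subgroup $\langle z\rangle$ of $\langle y\rangle$ coincides with $\langle x\rangle$: then $x \in \langle y\rangle$, so $\langle x, y\rangle = \langle y\rangle$ is cyclic and $y \in \mathrm{Cyc}_{G}(x)$ has the wrong order, a contradiction. The genuine obstacle is the subcase $\langle z\rangle \neq \langle x\rangle$. My plan for this subcase is to examine the $A$-action on the order-$p$ subgroups of $G$: if $\langle x\rangle$ and $\langle z\rangle$ lie in the same $A$-orbit, then an $A$-translate of $x$ sits inside $\langle y\rangle$, forcing adjacency between $x^{A}$ and $y^{A}$; otherwise one must leverage the transitivity of $N_{A}(\langle x\rangle)$ on $\langle x\rangle \setminus \{1\}$ together with the isolation hypothesis applied at the vertex $z^{A}$ (noting that $z \in C_G(x)$ sits in the cyclic subgroup $\langle y\rangle$) to rule out such $y$. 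It is this second alternative where I would concentrate essentially all of the technical work.
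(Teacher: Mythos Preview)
Your backward direction and the first three steps of your forward direction (prime order of $x$, transitivity of $N_A(\langle x\rangle)$ on $\langle x\rangle\setminus\{1\}$, and that $C_G(x)$ is a $p$-group) are correct and essentially coincide with the paper's argument. In fact the paper's proof stops exactly there: its forward direction concludes only that ``the exponent of $C_G(x)$ is a power of $p$'' and never argues for exponent~$p$.

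Your plan for the exponent-$p$ step, however, has a genuine gap. In the subcase $\langle z\rangle \neq \langle x\rangle$ with $\langle x\rangle$ and $\langle z\rangle$ in distinct $A$-orbits, you propose to invoke ``the isolation hypothesis applied at the vertex $z^{A}$''; but no such hypothesis is available --- only $x^{A}$ is assumed isolated, and there is no reason $z^{A}$ should be. This gap cannot be repaired, because the exponent-$p$ conclusion is actually false for a single isolated vertex. Take $G=C_{2}\times C_{4}$, $A=1$, and let $x$ generate the direct factor $C_{2}$. The only cyclic subgroup of $G$ containing $x$ is $\langle x\rangle$ itself, so $x^{A}=\{x\}$ is isolated in $\Delta(G,1)$; yet $C_{G}(x)=G$ has exponent $4$. (The paper applies the lemma only when $\Delta(G,A)$ is the empty graph, where \emph{every} vertex is isolated and hence every nonidentity element has prime order; in that situation exponent $p$ for $C_G(x)$ is immediate, so the downstream application survives even though the lemma, as stated for a single isolated vertex, does not.)
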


\begin{proof}
Suppose $x^{A}$ is an isolated vertex of $\Delta (G,A)$.  Thus, we can deduce that $y^{A} = x^{A}$ if there exists an automorphism $a \in A$ such that $\left\langle x, y^{a} \right\rangle$ is a cyclic subgroup. If $d$ is a proper divisor of the order of $x$, then $x^{A} \neq (x^{d})^{A}$, and $\left\langle x, x^{d} \right\rangle $ is cyclic, and $\Delta (G,A)$ has an edge for $x^A$, a contradiction.  This yields that $x$ is of prime order, say for the prime $p$. Suppose there is an element $y \in C_{G}(x)$ that has prime order for the prime $q \neq p$.  We see that $\left\langle x, y \right\rangle = \left\langle xy \right\rangle$ is cyclic, but clearly $y^{A} \neq x^{A}$.  Thus, the exponent of $C_{G}(x)$ is a power of $p$.  For any an element $y \in \left\langle x \right\rangle \setminus \{ 1 \}$, we see that $y^{A} = x^{A}$ an hence, there exists an automorphism $a\in A$ so that $x^{a}=y$.  Clearly, we have that automorphism $a \in N_{A}(\left\langle x\right\rangle)$.  Since $y$ was arbitrary, we see that $N_A (x)$ is acting transitively on $\langle x \rangle \setminus \{ 1 \}$.  

Conversely, suppose there exists an element $x \in G$ so that $x$ has order $p$ for a prime $p$, $C_{G}(x)$ is a $p$-group of exponent $p$, and $N_{A} (\left\langle x \right\rangle )$ acts transitively on $\left\langle x \right\rangle \setminus \{ 1 \}$.  Let $y^A$ be an $A$-class so that $x^A$ and $y^A$ are adjacent.  Thus, there is an automorphism $a \in A$ so that $\langle x, y^a \rangle$ is cyclic.  It follows that $y^a \in C_G (x)$.  Since $C_G (x)$ has exponent $p$, we see that $y^a \in \langle x \rangle$, and as $N_{A} (\left\langle x \right\rangle)$ acts transitively on $\left\langle x \right\rangle \setminus \{ 1 \}$., we deduce that $x^A = y^A$.  Therefore, $x^A$ is an isolated vertex in $\Delta (G,A)$.  
\end{proof}

%This next lemma is known, but we include a proof here for convenience.

%\begin{lemma} \label{2 Frobenius}
%Let $G$ be a $2$-Frobenius group such that the prime number $p$ divides both $|F(G)|$ and $|G/F_{2} (G)|$, then a Sylow $p$-subgroup of $G$ is not of exponent $p$.
%\end{lemma}

%\begin{proof}
%Let $R$ be the Sylow $p$-subgroup of $F(G)$, and let $Q$ be a Sylow $q$ subgroup of $G$ where $q$ is a prime dividing $F_{2}(G)/F(G)$.  Then $Q$ normalizes $R$, and $F(G)Q$ is normal in $G$.  By the Frattini argument, we see that $G = F(G)N_{G}(Q)$.  So there exists an element $x$ of order $p$ in $N_{G} (Q)$.  Then $(R/\Phi (R))Q \left\langle x \right\rangle$ is a $2$-Frobenius group.  Let $V$ be a minimal normal subgroup of the group $(R/\Phi (R))Q \left\langle x \right\rangle$ contained in $R/\Phi (R)$. Then $V$ is an irreducible module for $Q \left\langle x \right\rangle$ on which $Q$ acts Frobeniusly.  Therefore considered as a $Q$-module it is not homogeneous.  Hence, the center of $Q$ would act by scalars on $V$ and would be centralized by $x$ which is not possible.  Therefore, $V$ is a direct sum of homogeneous $Q$-submodules, say $V = V_{1} \oplus V_{2}\oplus \cdots \oplus V_{m}$ so that $\left\langle x \right\rangle$ acts transitively on $\{V_{1},...,V_{m}\}$.   But then if $v$ is a nontrivial element of $V_{1}$, the elements $v, v^{x}, v^{x^{2}}, \dots, v^{x^{p-1}}$ are linearly independent vectors in $V$ so that $(vx)^{p} = v v^{x} v^{x^{2}} \cdots v^{x^{p-1}} \neq 1$.  This proves the lemma.
%\end{proof}

We now characterize the groups where $\Delta (G,A)$ is the empty graph.

%\begin{theorem} \label{NoEdges} 
%Let $G$ be a group with $A \le {\rm Aut} (G)$.  Then $\Delta (G,A)$ is the empty graph if and only if every nonidentity element $x \in G$ has prime order, $N_{A}(\left\langle x\right\rangle )$ acts transitively on $\left\langle x \right\rangle \setminus \{ 1 \}$ for every $1 \neq x \in G$, and one of the following holds:
%\begin{enumerate}
%\item $G$ is a $p$-group for some prime $p$ and is of exponent $p$, 
%\item $G$ is a Frobenius group with Frobenius kernel of prime exponent and a Frobenius complement of prime order, 
%\item $G$ is isomorphic to $A_{5}$ and if $A \leq {\rm Aut} (A_{5})$, then either $A \cong S_{4}$ or $S_{5}$.
%\end{enumerate}
%\end{theorem}

\begin{proof}[Proof of Theorem \ref{intro-NoEdges}]
Suppose $\Delta (G,A)$ is the empty graph.  We deduce by Lemma \ref{isolated vert} that $G$ is a group in which every element is of prime order. The groups where every element have prime order are classified (see \cite{cdls} and \cite{deac}).  If  $G$ is solvable then either $G$ is a $p$-group of exponent $p$ or $G$ is a Frobenius group whose Frobenius kernel is a $p$-group of exponent $p$ and a Frobenius complement has order $q$ for distinct primes $p$ and $q$.  When $G$ is nonsolvable, it is shown in \cite{cdls} that $G = A_{5}$.

When $G = A_{5}$, then $A \leq {\rm Aut} (A_{5}) = S_{5}$.  By Lemma \ref{isolated vert} we know that the condition $(\ast )$ that $N_{A}(S)$ acts transitively on $S \setminus \{ 1 \}$ for any nontrivial cyclic subgroup $S$ of $G$.  We need to prove that $A$ is either ${\rm Aut} (A_{5})$ or is a maximal subgroup of ${\rm Aut} (A_{5})$ that is isomorphic to $S_{4}$.  Let $H \in {\rm Syl}_{5} (A_{5})$ and $T \in {\rm Syl}_{2} (N_{S_{5}}(H))$.   We have $T \cong \mathbb{Z}_{4}$
%TCIMACRO{\U{2124} }%
%BeginExpansion
%EndExpansion
and $N_{S_{5}} (H) = HT$ is a Frobenius group of order $20$.  We see that we may assume that $T \leq A$.  Let $B \in {\rm Syl}_{2} (A)$ with $T \leq B$.  As a Sylow $2$-subgroup of $S_{5}$ is a dihedral group of order $8$, we have either $T = B \cong \mathbb{Z}_{4}$ or $T < B \cong D_{8}$. If $B$ is of order $8$, then the list of subgroups
%TCIMACRO{\U{2124} }%
%BeginExpansion
%EndExpansion
of $S_{5}$ containing a Sylow $2$-subgroup shows that one of the following must hold: $A = B$ or $A \cong S_{4}$ or $A = S_{5}$.  If $A = B \cong D_{8}$, then $A$ contains a unique cyclic subgroup $T$ of order $4$ and this subgroup $T$  normalizes only two Sylow $5$-subroups.  On the other hand, $G$ has six Sylow $5$-subgroups.  So in this case $A$ does not satisfy the condition $(\ast)$. 

Assume now that $T = B$.  In this case $A = O(A)T$ since a Sylow $2$-subgroup of $A$ is cyclic and hence, $A$ is $2$-nilpotent. Thus, we obtain that $A = T$ or $A = N_{S_{5}} (H)$ for some Sylow $5$-subgroup $H$ of $S_{5}$.  In both cases, the condition $(\ast)$ is not satisfied.  Thus, $A \cong S_{4}$ or $A = {\rm Aut} (A_{5}) = S_{5}$.  In the second case $A$ clearly satisfies $(\ast)$.  Let us check the first case: assume $A \cong S_{4}$ and $S$ a cyclic subgroup of $G $.  If $\left\vert S \right\vert = 2$, there is nothing to check.  Suppose $\left\vert S\right\vert =3$, and observe that $G$ has $10$ subgroups of order $3$.  

We see that $A$ acts on the set of subgroups of $G$ of order $3$ (this is the set of Sylow $3$-subgroups of $G$). If $N_{A}(S)$ does not contain an involution, then the orbit of $A$ on ${\rm Syl}_{3} (G)$ that contains $S$ must have length that is divisible by $8$, and hence, is equal to $8$.  Then there must exist a subgroup  $R \in {\rm Syl}_{3}(G) \setminus S^{A}$ with $[A:N_{A}(R)] \in \{ 1, 2 \}$.  However, in $S_{5}$ the normalizer of a Sylow $3$-subgroup is isomorphic to $S_{3} \times \mathbb{Z}_{2}$ and is a maximal subgroup and is not contained in any subgroup isomorphic to $S_{4}$.  This is a contradiction.
%TCIMACRO{\U{2124} }%
%BeginExpansion
%EndExpansion

Thus, $N_{A} (S)$ contains an involution, and hence, acts transitively on $S \setminus \{ 1 \}$.  Suppose that $\left\vert S \right\vert = 5$, that is $S \in {\rm Syl}_{5}(G)$.  We have $\left\vert {\rm Syl}_{5} (G) \right\vert = 6$ and $A$ acts on ${\rm Syl}_{5}(G)$.  We see that $[A:N_{A}(S)]$ is a divisor of $24$ and is less than or equal to $6$.  On the other hand, $N_{S_{5}} (S)$ is a Frobenius group of order $20$.  So $N_{A} (S) \cong \mathbb{Z}_{4}$ and acts transitively on $S \setminus \{ 1 \}.$
%TCIMACRO{\U{2124} }%
%BeginExpansion
%EndExpansion
\end{proof}

We can now characterize the enhanced power graph and the Cyclic conjugacy class graphs that will be empty graphs.  %Note that this Theorem \ref{intro-last} from the Introduction.

%\begin{corollary}
%Let $A \leq {\rm Inn}(G)$.  Then $\Delta (G,A)$ is the empty graph if and only if either $G$ is an elementary abelian $2$-group or is a Frobenius group with Frobenius kernel that is an elementary abelian $3$-group and Frobenius complement of order $2$.
%\end{corollary}

\begin{proof}[Proof of Theorem \ref{intro-last}]
As $\Delta (G,A)$ has no edges, Theorem \ref{NoEdges} implies that $G$ cannot be $A_{5}$ and hence, $G$ must be either a $p$-group or a Frobenius group. In the first case, taking $1\neq x\in Z(G)$, we see that $N_{A}(\left\langle x\right\rangle )$ acts trivially and transitively on $\left\langle x \right\rangle \setminus \{1\}$.  This implies that $p = 2$, and thus, $G$ is an elementary abelian $2$-group.  Next, suppose that $G = FH$ is a Frobenius group with kernel $F$ of exponent $p$ for some prime $p$ and complement $H$ of prime order.  As $N_{A} (H)$ acts transitively on $H \setminus \{ 1 \}$ and $A \leq {\rm Inn} (G)$, we deduce that $\left\vert H \right\vert = 2$.  This implies $F$ is abelian and thus, elementary abelian.  We know $N_{A} (\left\langle x \right\rangle )/C_{A} (x)$ must be of order $p-1$ for every element $1 \neq x \in F$. Thus, $p-1  =2$.

It is not difficult to see that if $G$ is an elementary abelian $2$-group or a Frobenius group whose Frobenius kernel is an elementary abelian $3$-group and a Frobenius complement has order $2$, then $\Delta (G,A)$ will be the empty graph.  This completes the proof.
\end{proof}

\end{document}